\documentclass[journal]{IEEEtran}
\ifCLASSINFOpdf
\usepackage[pdftex]{graphicx}
\else
\fi
%
%

\usepackage{subfigure}

%
\usepackage{amsmath, amssymb, amsthm}
\usepackage{amsfonts}
%

\usepackage{algorithm}
\usepackage{algpseudocode}
\ifCLASSOPTIONcompsoc
\usepackage[caption=false,font=normalsize,labelfont=sf,textfont=sf]{subfig}
\else
\usepackage[caption=false,font=footnotesize]{subfig}
\fi

\usepackage{color}
\usepackage{xcolor}
\usepackage{tabularx}
\usepackage{multirow}
\usepackage{tablefootnote}

\hyphenation{op-tical net-works semi-conduc-tor}

\newtheorem{assumption}{Assumption}
\newtheorem{lemma}{Lemma}
\newtheorem{remark}{Remark}
\newtheorem{theorem}{Theorem}

\newcommand{\va}{{\mathbf{a}}}
\newcommand{\vb}{{\mathbf{b}}}

\newcommand{\vh}{{\mathbf{h}}}

\newcommand{\vu}{{\mathbf{u}}}
\newcommand{\vv}{{\mathbf{v}}}
\newcommand{\vw}{{\mathbf{w}}}
\newcommand{\vx}{{\mathbf{x}}}
\newcommand{\vy}{{\mathbf{y}}}

\newcommand{\vA}{{\mathbf{A}}}

\newcommand{\vC}{{\mathbf{C}}}

\newcommand{\vF}{{\mathbf{F}}}

\newcommand{\vH}{{\mathbf{H}}}
\newcommand{\vI}{{\mathbf{I}}}

\newcommand{\vQ}{{\mathbf{Q}}}
\newcommand{\vR}{{\mathbf{R}}}

\newcommand{\vU}{{\mathbf{U}}}
\newcommand{\vV}{{\mathbf{V}}}

\newcommand{\vX}{{\mathbf{X}}}
\newcommand{\vY}{{\mathbf{Y}}}

\newcommand{\cC}{{\mathcal{C}}}

\newcommand{\cF}{{\mathcal{F}}}

\newcommand{\cQ}{{\mathcal{Q}}}

\newcommand{\EE}{{\mathbb{E}}}
\newcommand{\RR}{\mathbb{R}}
\newcommand{\vzero}{\mathbf{0}}
\newcommand{\vone}{{\mathbf{1}}}
\newcommand{\oX}{\overline{\vX}}
\newcommand{\oY}{\overline{\vY}}
\newcommand{\ox}{\overline{\vx}}

\newcommand{\T}{\intercal}

\newcommand{\tX}{\widetilde{\vX}}
\newcommand{\tY}{\widetilde{\vY}}

\newcommand{\hlambda}{\widehat{\lambda}}
\newcommand{\vzeta}{\boldsymbol{\zeta}}

\newcommand{\norm}[1]{\left\| #1 \right\|}      

\begin{document}
	%
	\title{A Linearly Convergent Robust Compressed Push-Pull Method for Decentralized Optimization}
	%
	%
	%
	

 	\author{Yiwei Liao, Zhuorui Li,  and Shi Pu
		\thanks{Yiwei Liao is with the School of Data Science, The Chinese University of Hong Kong, Shenzhen, China and also with the Shcool of Information Science and Technology, University of Science and Technology of China, Hefei, China. 
		Zhuorui Li is with the H. Milton Stewart School of Industrial and System Engineering, Georgia Institute of Technology, Atlanta, USA.  
		Shi Pu is with the School of Data Science, The Chinese University of Hong Kong, Shenzhen, China.
		{\tt\small (emails: lyw@alu.scu.edu.cn, lizhuorui27@gmail.com,  pushi@cuhk.edu.cn)}}%
	}

	\maketitle
	
\begin{abstract}
In the modern paradigm of multi-agent networks, communication has become one of the main bottlenecks for decentralized optimization, where a large number of agents are involved in minimizing the average of the local cost functions. In this paper, we propose a robust compressed push-pull algorithm (RCPP) that combines gradient tracking with communication compression. In particular, RCPP is compatible with a much more general class of compression operators that allow both relative and absolute compression errors. We show that RCPP achieves linear convergence rate for smooth objective functions satisfying the Polyak-Łojasiewicz condition over general directed networks. Numerical examples verify the theoretical findings and demonstrate the efficiency, flexibility, and robustness of the proposed algorithm.
\end{abstract}
	
\begin{IEEEkeywords}
Decentralized optimization, robust communication compression, directed graph,  gradient tracking, linear convergence.
\end{IEEEkeywords}

	%
	\IEEEpeerreviewmaketitle

\section{Introduction}
	%
	%
	%
	%
	\IEEEPARstart{I}{n} this paper, we study the decentralized optimization problem: 
	\begin{equation} \label{problem}
		\min _{\vx\in\RR^p}~  f(\vx):=\frac{1}{n}\sum _{i=1}^n f_i(\vx),
	\end{equation}
	where  $n$ is the number of agents, $\vx$ is the global decision variable, and each agent $i$ only has access to its local objective function $f_i: \RR^p\rightarrow \RR$. The goal is to find an optimal and consensual solution through local computation and local sharing of information in a directed communication network. 
	
	Decentralized algorithms for solving \eqref{problem} were well studied in recent years. The seminal work \cite{Nedic2009distributed} proposed the distributed subgradient descent (DGD) method, where each agent updates its local copy by mixing with the received copies from neighbors in the network and moving towards the local gradient descent direction. However, under a constant step-size, DGD only converges to a neighborhood of the optimal solution. To obtain better convergence results, various works with bias-correction techniques were proposed, including EXTRA \cite{Shi2015Extra}, exact diffusion \cite{yuan2019exact1}, and gradient tracking based methods \cite{Xu2015Augmented,Di2016Next,Nedic2017achieving,Qu2018Harnessing}. These methods achieve linear convergence for minimizing strongly convex and smooth objective functions. Under the more general directed network topology, several modifications have been considered; see \cite{Tsianos2012PushSum,Nedic2015Distributed,Nedic2017achieving,zeng2017extrapush,xi2017dextra,xi2017add,Xin2018Linear,Xin2020General,Pu2020Robust,Pu2021Push,sun2022distributed} and the references therein. 
    The recent papers \cite{Nedic2018Network,yang2019survey} provided a comprehensive survey on decentralized algorithms.
	 
    In decentralized computation, exchanging complete information between neighboring agents may suffer from the communication bottleneck due to the limited energy and/or bandwidth. One of the promising means for reducing the communication costs is applying compression operators \cite{tang2018communication, Koloskova2019ChocoSGD, koloskova*2020decentralized,liu2020linear,Kajiyama2020Linear,kovalev2021linearly,Song2022CPP,Liao2022CGT,xiong2022quantized}. Most of the works have considered the relative compression error assumption, including unbiased compressors \cite{liu2020linear,kovalev2021linearly,Song2022CPP} and contractive biased compressors \cite{Koloskova2019ChocoSGD, koloskova*2020decentralized}, or the unification of them \cite{Liao2022CGT}. Recently, a few works have also considered quantized compression operators with absolute compression errors \cite{xiong2022quantized,Kajiyama2020Linear,Magnusson2020Maintaining}. 
    To explore a unified framework for both relative and absolute compression errors, the work in \cite{michelusi2022finitebit} studied finite-bit quantization, but the absolute compression error needs to diminish exponentially fast for the desired convergence. In \cite{nassif2022quantization}, the unbiased relative compression error was considered together with the absolute compression error, but the latter slows down the algorithmic convergence. 
	 
    In this paper, we propose a robust compressed push-pull method (RCPP) for decentralized optimization with communication compression over general directed networks. In particular, we consider a more general assumption on the communication compressors, which unifies both relative and absolute compression errors. By employing the dynamic scaling compression technique, RCPP provably achieves linear convergence for minimizing smooth objective functions satisfying the Polyak-Łojasiewicz inequality (PL condition) under the general class of compression operators. 
  
    The main contribution of this paper is summarized as follows:
\begin{itemize}
\item For decentralized optimization with communication compression, we consider a general class of compression operators, which unifies the commonly used relative and absolute error compression assumptions. Such a condition is most general in the decentralized optimization literature to the best of our knowledge.
 \item We propose a new method called the robust compressed push-pull algorithm that works over general directed networks. Based on the dynamic scaling compression technique, RCPP provably achieves linear convergence for minimizing smooth objective functions satisfying the PL condition under the general unified assumption on the compression operators.
\item Numerical results demonstrate that RCPP is efficient compared to the state-of-the-art methods and robust under various compressors.
\end{itemize}
In Table~\ref{table:Categorization},  we compare this paper with related works regarding the assumptions on the compression operators, objective functions, graph topologies and convergence guarantees. 

    The rest of this paper is organized as follows. We introduce the notation in Section \ref{subsec: Notation}.
    In  Section~\ref{Sec: Preliminary}, we state the standing assumptions and discuss the compression methods. In Section~\ref{Sec: RCPP}, we introduce the RCPP method. In Section~\ref{Sec: CA}, we establish the linear convergence of RCPP under communication compression. Numerical experiments are provided to verify the theoretical findings in Section~\ref{Sec: Simulation}. Finally, conclusions are given in Section~\ref{Sec: Conclusion}.
	\vspace{-3mm}
\begin{table}[H]
	\centering
	\caption{\small Comparison of related works on decentralized optimization with communication compression.}	
	\label{table:Categorization}
	\vspace{-1mm}
		\begin{tabular}{@{}c@{}c@{~~}c@{}c@{}c@{~}c@{}}
			\hline
			References       &relative    & absolute & convergence  & graph & function\\
			\hline
			\cite{Koloskova2019ChocoSGD,koloskova*2020decentralized}   &C $^1$   &$\times$    &sublinear &Und &SVX $^4$\\
			\cite{liu2020linear,kovalev2021linearly}  &U      &$\times$      &linear &Und &SVX\\
			\cite{Zhang2023Innovation,yau2022docom}     &C     &$\times$      &linear &Und &SVX\cite{Zhang2023Innovation}, PL\cite{yau2022docom}\\
			\cite{Liao2022CGT,Yi2022Communicationb} &G  & $\times$ & linear &Und &SVX\cite{Liao2022CGT}, PL\cite{Yi2022Communicationb}\\
			\cite{Kajiyama2020Linear,Magnusson2020Maintaining,Yi2022Communicationb} &$\times$   &Q    &linear &Und &SVX\cite{Kajiyama2020Linear,Magnusson2020Maintaining}, PL\cite{Yi2022Communicationb}\\
			\cite{michelusi2022finitebit}  &C   &dim-d    &linear* $^2$ &Und &SVX\\
			\cite{nassif2022quantization}   &U  &$\checkmark$  &neighborhood $^3$  &Und &SVX\\	
			\cite{Song2022CPP}  &U       &$\times$         &linear &Di &SVX\\
			\cite{xiong2022quantized} &$\times$            &Q    &linear &Di &SVX\\
			\textbf{our paper}       &\textbf{G}         &$\pmb{\checkmark}$        &\textbf{linear}   &\textbf{Di}     &\textbf{PL}\\	
			\hline
		\end{tabular}
\end{table}
\vspace{-3mm}
\footnotesize{$^1$ `C', `U', `G' represent contractive biased, unbiased, general relative compression assumptions, respectively. `dim-d' and `Q' represent dimension-dependent absolute compression assumption and quantizer, respectively. `Und' and `Di' denote undirected and directed graphs, respectively.}

\footnotesize{$^2$ * The result has extra requirement, e.g., exponentially decaying error.}

\footnotesize{$^3$ The algorithm converges to the neighborhood of the optimal solution.}

\footnotesize{$^4$ `SVX' and `PL' represent strongly convex functions and the PL condition, respectively.}
\normalsize

\subsection{Notation}\label{subsec: Notation}
	A vector is viewed as a column by default. The $n$-dimensional column vector with all entries equal to $1$ is denoted by $\mathbf{1}$. Each agent $i$ holds a local copy $\vx_i\in\mathbb{R}^p$ of the decision variable and an auxiliary variable $\vy_i\in\mathbb{R}^p$ to track the average gradient. Vectors $\vx_{i}^{k}$ and $\vy_{i}^{k}$ represent their corresponding values at the $k$-th iteration. For simplicity, denote the aggregated variables as
	$\vX := [\vx_1, \vx_2, \ldots, \vx_n]^{\T}\in\mathbb{R}^{n\times p}$, 
	$ \vY := [\vy_1, \vy_2, \ldots, \vy_n]^{\T}\in\mathbb{R}^{n\times p}$. 
	At step $k$, $\vX^{k}$ and $\vY^{k}$ represent their corresponding values. The other aggregated variables $\vH_{x}$, $\vH_{y}$, $\vQ_{x}$, $\vQ_{y}$, $\widehat{\vX}$, $\widehat{\vY}$, $\tX$, and $\tY$ are defined similarly. 
	The aggregated gradients are 
	$
	\nabla \vF(\vX):=\left[\nabla f_1(\vx_1), \nabla f_2(\vx_2), \ldots, \nabla f_n(\vx_n)\right]^{\T}\in\mathbb{R}^{n\times p}. 
	$  
	With slight notation abuse, the gradients $\nabla f_i(\vx_i)$ and $\nabla f(\vx)$ are occasionally regarded as row vectors, and the average of all the local gradients is
	$
	{\nabla} \overline {\vF}(\vX) := \frac{1}{n}{\vone^\T \nabla \vF(\vX)}=\frac{1}{n}\sum_{i=1}^n\nabla f_i(\vx_i).
	$
    The notations $\|\cdot\|$ and $\|\cdot\|_F$ define the Euclidean norm of a vector and the Frobenius norm of a matrix, respectively. 
    
    The set of nodes (agents) is denoted by $\mathcal{N}=\{1,2,\ldots,n\}$.  A directed graph (digraph) is a pair $\mathcal{G}=(\mathcal{N},\mathcal{E})$, where the edge set $\mathcal{E}\subseteq \mathcal{N}\times \mathcal{N}$ consists of ordered pairs of nodes. If there exists a directed edge from node $i$ to node $j$ in $\mathcal{G}$, or $(i,j)\in\mathcal{E}$, then $i$ is called the parent node, and $j$ is the child node. The parent node can directly transmit information to the child node, but not the other way around. 
    Let $\mathcal{G}_\mathbf{B}=(\mathcal{N},\mathcal{E}_\mathbf{B})$ denote a digraph induced by a nonnegative square matrix $\mathbf{B}$, where $(i,j)\in\mathcal{E}_\mathbf{B}$  if and only if $\mathbf{B}_{ji}>0$. In addition, $\mathcal{R}_\mathbf{B}$ is the set of roots of all the possible spanning trees in $\mathcal{G}_\mathbf{B}$.
  
\section{Problem Formulation}\label{Sec: Preliminary}
    In this section, we first provide the basic assumptions on the communication graphs and the objective functions. Then, we introduce a general assumption on the compression operators to unify both the relative and absolute compression errors.
\subsection{Communication graphs and objective functions}
    Consider the following conditions on the communication graphs among the agents and the corresponding mixing matrices.
\begin{assumption}\label{Assumption: network}
	The matrices $\vR$ and $\vC$ are both supported by  a strongly connected graph $\mathcal{G} = (\mathcal{N},\mathcal{E})$, i.e., 
	$\mathcal{E}_{\vR} = \{(j,i)\in\mathcal{N}\times\mathcal{N} \big| \vR_{ij} > 0 \} \subset \mathcal{E}$ 
	and 
	$\mathcal{E}_{\vC} = \{(j,i)\in\mathcal{N}\times\mathcal{N} \big| \vC_{ij} > 0 \} \subset \mathcal{E}$. 
	The matrix $\vR$ is row stochastic, and $\vC$ is column stochastic, i.e., $\vR\vone=\vone$ and $\vone^\T\vC=\vone^\T$. In addition, $\mathcal{R}_{\vR} \cap \mathcal{R}_{\vC^\T} \neq \emptyset$.
\end{assumption}
\begin{remark}
	Assumption~\ref{Assumption: network} is weaker than requiring both $\mathcal{G}_{\vR}$ and $\mathcal{G}_{\vC}$ are strongly connected \cite{Pu2021Push}. It implies that $\vR$ has a unique nonnegative left eigenvector $\vu_{R}$ w.r.t. eigenvalue $1$ with $\vu_{R}^\T\vone=n$, and $\vC$ has a unique nonnegative right eigenvector $\vu_{C}$ w.r.t. eigenvalue $1$ such that $\vu_{C}^\T\vone=n$. The nonzero entries of $\vu_{R}$ and $\vu_{C}$ correspond to the nodes in $\mathcal{R}_{\vR}$ and $\mathcal{R}_{\vC^\T}$, respectively. Since $\mathcal{R}_{\vR} \cap \mathcal{R}_{\vC^\T} \neq \emptyset$, we have $\vu_{R}^\T \vu_{C} > 0$.	
\end{remark}
    The objective functions are assumed to satisfy the following condition.
\begin{assumption}\label{Assumption: function}
	The objective function $f$ satisfies the Polyak-Łojasiewicz inequality (PL condition), i.e.,
	\begin{equation}\label{PL}
	\norm{\nabla f(\vx)}^2\geq 2 \mu (f(\vx)-f(\vx^*)),
	\end{equation}
	where   $\vx^*$  is an optimal  solution to problem \eqref{problem}. 
	For each agent $i$,  its gradient is $L_i$-Lipschitz continuous, i.e.,
	\begin{equation}\label{L-smooth}
	\|\nabla f_i(\vx)-\nabla f_i(\vx')\|\le L_i \|\vx-\vx'\|,\; \forall\vx,\vx'\in\mathbb{R}^p.
	\end{equation}
\end{assumption}
\begin{remark}
    If $f$ is $\mu$-strongly convex as commonly assumed, the PL condition is automatically satisfied. From Assumption \ref{Assumption: function}, the gradient of $f$ is $L$-Lipschitz continuous, where $L=\max{\{L_i\}}$. We denote $\kappa=L/\mu$ as the condition number.
\end{remark}	
 
\subsection{A unified compression assumption}\label{Sec: Compression}
We now present a general assumption on the compression operators which incorporates both relative and absolute compression errors. 	
\begin{assumption}\label{Assumption:General}
	The  compression operator $\cC \colon \RR^d \to \RR^d$  satisfies  
	\begin{align}\label{def:Ncompressor4}
		\EE_{\cC} \norm{\cC(\vx) -\vx}^2 &\leq C\norm{\vx}^2+\sigma^2,& ~\forall \vx \in \RR^d, 
	\end{align}
	for some constants $C,\sigma^2\ge 0$, and the $r$-scaling of $\cC$ satisfies 
	\begin{align}\label{def:contract4}
		\EE_{\cC} \norm{\cC(\vx)/r -\vx}^2 &\leq (1-\delta)\norm{\vx}^2+\sigma^2_{r},& ~\forall \vx \in \RR^d , 
	\end{align}
	for some constants $r>0$, $\delta\in(0,1]$ and $\sigma^2_r\ge 0$.
\end{assumption}
	Among the compression conditions considered for decentralized optimization algorithms with convergence guarantees, Assumption \ref{Assumption:General} is the weakest to the best of our knowledge. Specifically, if there is no absolute error, i.e., $\sigma^2=\sigma_{r}^2=0$, then Assumption \ref{Assumption:General} degenerates to the assumption in \cite{Liao2022CGT} that unifies the compression operators with relative errors. If there is no relative error, i.e., $C=0$ and $\delta=1$, then the condition becomes the assumption on the quantizers in \cite{Kajiyama2020Linear,xiong2022quantized}. Therefore, Assumption \ref{Assumption:General} provides a unified treatment for both relative and absolute compression errors. In addition, if $C<1$, Assumption \ref{Assumption:General} reduces to the condition in \cite{michelusi2022finitebit}. 

\section{A Robust Compressed Push-Pull Method}\label{Sec: RCPP}
    In this section, we first introduce the dynamic scaling compression technique that deals with the absolute compression error. Then, we propose the RCPP algorithm and discuss its connections with the existing methods.
    
\subsection{The dynamic scaling compression technique}
    While Assumption~\ref{Assumption:General} provides a unified condition on the compression operators, new challenges are brought to the algorithm design and analysis. Without proper treatment for the compression errors, the algorithmic performance could deteriorate, particularly due to the absolute error that may lead to compression error accumulation. To tackle the challenge, we consider the dynamic scaling compression technique \cite{Kajiyama2020Linear}.
    Consider the operator $\cQ(\vx)=s_k\cC(\vx/s_k)$, where $s_k$ is a dynamic parameter related to the iteration $k$. Then from Assumption~\ref{Assumption:General}, we have 
$\EE_{\cQ} \norm{\cQ(\vx) -\vx}^2
  = \EE_{\cC} \norm{s_k\cC(\vx/s_k) -\vx}^2
  = s_k^{2}\EE_{\cC} \norm{\cC(\vx/s_{k}) -\vx/s_{k}}^2
  \leq s_k^2(C\norm{\vx/s_k}^2+\sigma^2)
  = C\norm{\vx}^2+s_k^2\sigma^2
$.
    Similarly, we know
$
 \EE_{\cQ} \norm{\cQ(\vx)/r -\vx}^2  \leq (1-\delta)\norm{\vx}^2+s_k^2\sigma^2_{r}
$. 
   Note that only $\cC(\vx/s_k)$ needs to be transmitted during the communication process, and the recovery of signal is done by computing $\cQ(\vx)=s_k\cC(\vx/s_k)$ on the receiver's side. By using the dynamic scaling compression technique, the absolute errors can be controlled by decaying the parameter $s_k$.
   
\subsection{A robust compressed push-pull method}
   We describe the proposed RCPP method in Algorithm \ref{Alg:RCGTe}. 
   Lines 2 and  9 represent the updates for the local decision variables and the gradient trackers, respectively. In Lines 3 and 10, the dynamic scaling compression technique is applied to execute difference compression between the local updates and the auxiliary variables. Difference compression reduces the relative compression errors \cite{Koloskova2019ChocoSGD,Liao2022CGT}, while the dynamic scaling compression controls the absolute compression errors. More specifically, the operator $\cQ$ is a dynamic scaling compressor given by $\cQ(\vx)=s_k\cC(\vx/s_k)$. 
   The compressed vector  $\cC((\widetilde{\vx}_{i}^{k}-\vh^{k}_{i,x})/s_k)$  is transmitted to the neighbors of agent $i$ and recovered by computing $s_k \cC((\widetilde{\vx}_{i}^{k}-\vh^{k}_{i,x})/s_k)$ after communication, where $\widetilde{\vx}_{i}^{k}$ and $\vh^{k}_{i,x}$ denote agent $i$'s local update and auxiliary variable, respectively. It is worth noting that if the dynamic scaling compression technique is not used, then the absolute compression error would accumulate and significantly impact the algorithm's convergence.
   \begin{figure}[htbp]
   	\centering
   	\begin{minipage}{.99\linewidth}
   		\begin{algorithm}[H]
   			\caption{A Robust Compressed Push-Pull Method}
   			\label{Alg:RCGTe}
   			\textbf{Input:} step-sizes $\Lambda=\text{diag}([\lambda_1,\lambda_2,\ldots,\lambda_n])$, parameters $\alpha_x,\alpha_y$, $\gamma_x,\gamma_y$, $\{s_k\}_{k\ge 0}$,  
   			initial values $\vX^{0}$, $\vY^{0}=\nabla\vF(\vX^{0})$, $\vH_{x}^0=0$, $\vH_{y}^0=0$, $\vH_{R}^0=0$, $\vH_{C}^0=0$, number of iterations $K$
   			\begin{algorithmic}[1]
   				\For{$k=0,1,2,\dots, K-1$}
   				\State $\widetilde{\vX}^{k}=\vX^{k}-\Lambda \vY^{k}$   
   				\State $\vC^{k}_{x}=\cC((\widetilde{\vX}^{k}-\vH^{k}_{x})/s_k)$   
   				\State $\widehat{\vX}^{k}=\vH^{k}_{x}+\vQ^{k}_{x}$ $^1$ \footnotetext{$^1$ $\vQ^{k}_{x}$ is the result of  dynamic scaling compression with  $\vQ^{k}_{x}=\cQ(\widetilde{\vX}^{k}-\vH^{k}_{x})=s_k\cC((\widetilde{\vX}^{k}-\vH^{k}_{x})/s_k)=s_k\vC^{k}_{x}$. The operation for $\vQ^{k}_{y}$ is the same.}
   				\State $\widehat{\vX}_{R}^{k}=\vH^{k}_{R}+\vR\vQ^{k}_{x}$ \Comment{Communication}
   				\State $\vH^{k+1}_{x}=(1-\alpha_x)\vH^{k}_{x}+\alpha_x\widehat{\vX}^{k}$
   				\State $\vH_{R}^{k+1}=(1-\alpha_x)\vH^{k}_{R}+\alpha_x\widehat{\vX}_{R}^{k}$
   				\State $\vX^{k+1}=\widetilde{\vX}^{k}-\gamma_{x}(\widehat{\vX}^{k}-\widehat{\vX}_{R}^{k})$   
   				\State $\widetilde{\vY}^{k}= \vY^{k}+\nabla\vF(\vX^{k+1})-\nabla\vF(\vX^{k})$ 
   				\State $\vC^{k}_{y}=\cC((\widetilde{\vY}^{k}-\vH^{k}_{y})/s_k)$ 
   				\State $\widehat{\vY}^{k}=\vH^{k}_{y}+\vQ^{k}_{y}$
   				\State $\widehat{\vY}_{C}^{k}=\vH^{k}_{C}+\vC\vQ^{k}_{y}$ \Comment{Communication}
   				\State $\vH^{k+1}_{y}=(1-\alpha_y)\vH^{k}_{y}+\alpha_y\widehat{\vY}^{k}$
   				\State $\vH_{C}^{k+1}=(1-\alpha_y)\vH^{k}_{C}+\alpha_y\widehat{\vY}_{C}^{k}$
   				\State $\vY^{k+1}=\widetilde{\vY}^{k}-\gamma_{y}(\widehat{\vY}^{k}-\widehat{\vY}_{C}^{k})$ 
   				\EndFor
   			\end{algorithmic}
      \noindent\textbf{Output:} $\vX^{K},\vY^{K}$
   		\end{algorithm}
   	\end{minipage}
   \end{figure}
   
   In Lines 4 and 11, the decision variables and the gradient trackers are locally recovered, respectively. Lines 5 and 12 represent the communication steps, where each agent mixes the received compressed vectors multiplied by $s_k$.  The variables $\widehat{\vX}_{R}^{k}$ and $\widehat{\vY}_{C}^{k}$ are introduced to store the  aggregated information received from the communication updates. By introducing such  auxiliary variables, there is no need to store all the neighbors' reference points \cite{liu2020linear,Koloskova2019ChocoSGD}. Lines 6-7 and 13-14 update the auxiliary variables, where parameters $\alpha_x,\alpha_y$ control the relative compression errors; see e.g., \cite{Liao2022CGT} for reference. The consensus updates are performed in Lines 8 and 15, where $\gamma_{x},\gamma_{y}$ are the global consensus parameters to guarantee the algorithmic convergence. 

   To see the connection between RCPP and the Push-Pull/AB algorithm \cite{Xin2018Linear, Pu2021Push}, note that we have $\vH_{R}^0=\vR \vH_{x}^0$ and $\vH_{C}^0=\vC \vH_{y}^0$ from the initialization. It follows by induction that $\widehat{\vX}_{R}^{k}=\vR \widehat{\vX}^{k}$, $\widehat{\vY}_{C}^{k}=\vC \widehat{\vY}^{k}$ and  $\vH_{R}^k=\vR \vH_{x}^k$, $\vH_{C}^k=\vC \vH_{y}^k$. Recalling Lines 8 and 15 in Algorithm~\ref{Alg:RCGTe}, we have
\begin{align}
   \nonumber\vX^{k+1}=&\widetilde{\vX}^{k}-\gamma_{x}(\widehat{\vX}^{k}-\widehat{\vX}_{R}^{k})
  =\widetilde{\vX}^{k}-\gamma_{x}(\widehat{\vX}^{k}-\vR \widehat{\vX}^{k})\\
   \label{Xkplus}=&\widetilde{\vX}^{k}-\gamma_{x}(\vI-\vR)\widehat{\vX}^{k}
\end{align}
and
\begin{align}
   \nonumber \vY^{k+1}=&\widetilde{\vY}^{k}-\gamma_{y}(\widehat{\vY}^{k}-\widehat{\vY}_{R}^{k})
  =\widetilde{\vY}^{k}-\gamma_{y}(\widehat{\vY}^{k}-\vC \widehat{\vY}^{k})\\
   \label{Ykplus}=&\widetilde{\vY}^{k}-\gamma_{y}(\vI-\vC)\widehat{\vY}^{k}.
\end{align}
   If $\tX^{k}$ and $\tY^{k}$ are not compressed, i.e., $\widehat{\vX}^{k}=\tX^{k}$ and $\widehat{\vY}^{k}=\tY^{k}$, then,
   $
   \vX^{k+1}=\tX^{k}-\gamma_{x} (\vI-\vR)\tX^{k}
   =[(1-\gamma_{x})\vI+\gamma_x \vR](\vX^{k}-\Lambda \vY^{k}),
   $
   and 
   $
   \vY^{k+1}=\tY^{k}-\gamma_{y} (\vI-\vC)\tY^{k}
   =[(1-\gamma_{y})\vI+\gamma_y\vC](\vY^{k}+\nabla\vF(\vX^{k+1})-\nabla\vF(\vX^{k})).
   $
   Letting the consensus step-sizes be $\gamma_{x}=1$ and $\gamma_{y}=1$, the above updates recover those in the Push-Pull/AB algorithm \cite{Xin2018Linear, Pu2021Push}. 

   In addition, RCPP retains the property of gradient tracking based methods. From Line 15 in Algorithm 1,
 $\vone^\T \vY^{k+1}=\vone^\T  (\widetilde{\vY}^{k}-\gamma_{y}(\vI-\vC)\widehat{\vY}^{k})
 =\vone^\T ( \vY^{k}+\nabla\vF(\vX^{k+1})-\nabla\vF(\vX^{k}))
  =\vone^\T \nabla\vF(\vX^{k+1})$,
   where the second equality is from $\vone^\T (\vI-\vC)=\vzero$, and the last equality is deduced by induction given that $\vY^{0}=\nabla\vF(\vX^{0})$. Define $\oX^{k}=\frac{1}{n}\vu_{R}^{\T}\vX^{k}$	and $\oY^{k}=\frac{1}{n}\vone^{\T}\vY^{k}$.  Once $(\vx_{i}^{k})^\T\rightarrow\oX^{k}$ and $(\vy_{i}^{k})^\T\rightarrow\oY^{k}$, then each agent can track the average gradient, i.e.,  $(\vy_{i}^{k})^\T\rightarrow\oY^{k}=\frac{1}{n}\vone^\T\nabla\vF(\vX^{k})\rightarrow \frac{1}{n}\vone^\T\nabla\vF(\vone\oX^{k})$.

\section{Convergence analysis}\label{Sec: CA}
   In this section, we study the convergence property of RCPP under smooth objective functions satisfying the PL condition. 
   For simplicity of notation, denote $\Pi_{R}=\vI-\frac{\vone \vu_{R}^{\T}}{n}$, $\Pi_{C}=\vI-\frac{\vu_{C}\vone^{\T}}{n}$ and $\vX^*=(\vx^*)^\T\in\mathbb{R}^{1\times p}$. The main idea is to bound  the optimization error $\Omega_{o}^{k}:=\EE\big[f(\oX^{k})-f(\vX^*)\big]$, consensus error $\Omega_{c}^{k}:=\EE\big[\|\Pi_{R}\vX^{k}\|_R^2\big]$, gradient tracking error $\Omega_{g}^{k}:=\EE\big[\|\Pi_{C}\vY^{k}\|_C^2\big]$, and compression errors $\Omega_{cx}^{k}:=\EE\big[\| \tX^{k}- \vH^{k}_x \|_F^2\big]$ and  $\Omega_{cy}^{k}:=\EE\big[\|\vY^{k}-\vH^{k}_y\|_F^2\big]$ through a linear system of inequalities, where $\|\Pi_{C}\vY^{k}\|_R$ and $\|\Pi_{C}\vY^{k}\|_C$ are specific norms introduced in Lemma \ref{Norm_R_C}. Denote $\overline{\lambda}=\frac{1}{n}\vu_{R}^{\T}\Lambda\vu_{C}$, $\hlambda=\max\limits_{i}\{\lambda_i\}$.

   We first introduce two supporting lemmas. 
\begin{lemma}\label{Norm_R_C}
	There exist invertible matrices  $\widetilde{\vR},\widetilde{\vC}$ such that the induced vector norms $\norm{\cdot}_{R}$ and $\norm{\cdot}_{C}$ satisfy $\norm{\vv}_{R}=\Vert\widetilde{\vR}\vv\Vert$ and $\norm{\vv}_{C}=\Vert\widetilde{\vC}\vv\Vert$, respectively. Then, for any $\gamma_{x},\gamma_{y} \in (0,1]$, 
	$\norm{\Pi_{R}\vR_{\gamma}}_{R}\leq 1-\theta_{R} \gamma_{x}$ 
	and 
	$\norm{\Pi_{C}\vC_{\gamma}}_{C}\leq 1-\theta_{C} \gamma_{y},$ 
	where $\vR_{\gamma}=\vI-\gamma_x(\vI-\vR)$, $\vC_{\gamma}=\vI-\gamma_y(\vI-\vC)$, $\theta_{R}$ and $\theta_{C}$ are constants in $(0,1]$. In particular, $\norm{\Pi_{R}}_{R}=\norm{\Pi_{C}}_{C}=1$.
\end{lemma}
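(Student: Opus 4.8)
The plan is to reduce both inequalities to a single spectral statement and then build the two norms by hand; I treat the $R$-part in detail, the $C$-part being symmetric. First I would write $\vR_{\gamma}=(1-\gamma_x)\vI+\gamma_x\vR$ and exploit $\Pi_{R}\vone=\vzero$ together with $\vu_{R}^{\T}\vR=\vu_{R}^{\T}$ to obtain the commutation $\Pi_{R}\vR_{\gamma}=\vR_{\gamma}\Pi_{R}=(1-\gamma_x)\Pi_{R}+\gamma_x\Pi_{R}\vR$. By homogeneity and the triangle inequality of any induced matrix norm, $\norm{\Pi_{R}\vR_{\gamma}}_{R}\le(1-\gamma_x)\norm{\Pi_{R}}_{R}+\gamma_x\norm{\Pi_{R}\vR}_{R}$. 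Hence it suffices to construct an inner-product norm $\norm{\vv}_{R}=\norm{\widetilde{\vR}\vv}$ in which \emph{simultaneously} $\norm{\Pi_{R}}_{R}=1$ and $\norm{\Pi_{R}\vR}_{R}=\sigma_{R}<1$; then $\theta_{R}:=1-\sigma_{R}\in(0,1]$ gives $\norm{\Pi_{R}\vR_{\gamma}}_{R}\le 1-\theta_{R}\gamma_x$ for every $\gamma_x\in(0,1]$, and the ``in particular'' claim is produced by the same construction.

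Next I would analyze $\Pi_{R}$ as a projection. Since $\Pi_{R}\vv=\vv-\tfrac{1}{n}\vone(\vu_{R}^{\T}\vv)$ is idempotent, its range is the hyperplane $\cV_{R}=\{\vv:\vu_{R}^{\T}\vv=0\}$ and its kernel is $\mathrm{span}(\vone)$. Because $\vu_{R}^{\T}\vR=\vu_{R}^{\T}$, the subspace $\cV_{R}$ is $\vR$-invariant, and on $\cV_{R}$ one has $\Pi_{R}\vR=\vR$; moreover $\vR|_{\cV_{R}}$ carries exactly the eigenvalues of $\vR$ other than the Perron value $1$. Primitivity of $\vR$ (strong connectivity under the standard positive-diagonal convention on the mixing matrices) makes $1$ the unique eigenvalue of maximum modulus, so $\rho(\vR|_{\cV_{R}})<1$.

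I would then build the norm explicitly. Fix $\sigma_{R}$ with $\rho(\vR|_{\cV_{R}})<\sigma_{R}<1$. Applying Schur triangularization to $\vR|_{\cV_{R}}$ followed by conjugation with $\mathrm{diag}(1,t,t^{2},\dots)$ for sufficiently small $t$ yields an inner product on $\cV_{R}$ whose induced operator norm bounds $\vR|_{\cV_{R}}$ by $\sigma_{R}$. I would extend this to a global inner product on $\RR^{n}$ by declaring $\mathrm{span}(\vone)\perp\cV_{R}$; writing its Gram matrix as $\widetilde{\vR}^{\T}\widetilde{\vR}$ gives $\norm{\vv}_{R}=\norm{\widetilde{\vR}\vv}$. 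In this norm $\Pi_{R}$ is the orthogonal projection onto $\cV_{R}$, so $\norm{\Pi_{R}}_{R}=1$; and for $\vv=c\vone+\vw$ with $\vw\in\cV_{R}$ one computes $\Pi_{R}\vR\vv=\vR\vw$, whence $\norm{\Pi_{R}\vR\vv}_{R}\le\sigma_{R}\norm{\vw}_{R}\le\sigma_{R}\norm{\vv}_{R}$ using the orthogonal decomposition $\norm{\vw}_{R}\le\norm{\vv}_{R}$. This establishes $\norm{\Pi_{R}\vR}_{R}\le\sigma_{R}$. The construction for $\vC$ is identical after swapping roles: $\Pi_{C}$ has range $\{\vv:\vone^{\T}\vv=0\}$ and kernel $\mathrm{span}(\vu_{C})$, this range is $\vC$-invariant since $\vone^{\T}\vC=\vone^{\T}$, and the same Schur/scaling argument yields $\widetilde{\vC}$ with $\norm{\Pi_{C}}_{C}=1$ and $\norm{\Pi_{C}\vC}_{C}=\sigma_{C}<1$, giving $\theta_{C}=1-\sigma_{C}$.

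I expect the main obstacle to be the \emph{simultaneous} control of $\norm{\Pi_{R}}_{R}$ and $\norm{\Pi_{R}\vR}_{R}$ in one fixed norm: a generic ``$\rho+\epsilon$'' norm tuned to $\Pi_{R}\vR$ need not keep $\norm{\Pi_{R}}_{R}=1$. The block-orthogonal construction, which makes $\mathrm{span}(\vone)\perp\cV_{R}$ and tunes the metric only on the invariant subspace $\cV_{R}$, is precisely what decouples these two requirements; verifying $\rho(\vR|_{\cV_{R}})<1$ is where primitivity is essential, and the convexity identity $\Pi_{R}\vR_{\gamma}=(1-\gamma_x)\Pi_{R}+\gamma_x\Pi_{R}\vR$ is what upgrades a single spectral bound into the uniform-in-$\gamma_x$ linear estimate.
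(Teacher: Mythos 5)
The paper does not actually prove this lemma in-line; it defers to the supplementary material of \cite{Song2022CPP}, so there is no internal argument to compare against. Your construction is the standard one for results of this type, and most of it is sound: the convexity identity $\Pi_{R}\vR_{\gamma}=(1-\gamma_x)\Pi_{R}+\gamma_x\Pi_{R}\vR$, the identification of $\mathrm{range}(\Pi_R)=\cV_R=\{\vv:\vu_R^\T\vv=0\}$ and $\ker(\Pi_R)=\mathrm{span}(\vone)$, the $\vR$-invariance of $\cV_R$, the block-orthogonal extension that makes $\Pi_R$ an orthogonal projection (hence of norm exactly $1$) while tuning the metric only on $\cV_R$, and the Cholesky factorization of the Gram matrix into $\widetilde{\vR}^\T\widetilde{\vR}$ are all correct and correctly assembled. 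Your closing observation about why the two norm requirements must be decoupled is exactly the right point.

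The genuine gap is the justification of $\rho(\vR|_{\cV_R})<1$, which is the load-bearing spectral fact. You derive it from ``primitivity of $\vR$ (strong connectivity \dots)'', but Remark~1 of the paper explicitly states that Assumption~\ref{Assumption: network} is \emph{weaker} than requiring $\cG_{\vR}$ and $\cG_{\vC}$ to be strongly connected: only the support graph $\cG$ is strongly connected, while $\cG_{\vR}$ itself need only satisfy $\mathcal{R}_{\vR}\cap\mathcal{R}_{\vC^\T}\neq\emptyset$, i.e.\ contain a directed spanning tree. So $\vR$ may be reducible and Perron--Frobenius for primitive matrices does not apply. The fact you need is the consensus-theoretic one: a row-stochastic matrix whose digraph contains a directed spanning tree and which is aperiodic (e.g.\ has positive diagonal entries) has $1$ as a simple eigenvalue with all other eigenvalues strictly inside the unit disk; this is what makes $\vR|_{\cV_R}$ a strict contraction in spectral radius. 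Note also that some aperiodicity hypothesis is genuinely indispensable and not merely a ``convention'': for $n=2$ and $\vR=\bigl(\begin{smallmatrix}0&1\\1&0\end{smallmatrix}\bigr)$, Assumption~\ref{Assumption: network} as literally written is satisfied, yet $\vR|_{\cV_R}=-1$, $\rho=1$, and at $\gamma_x=1$ no norm can give $\norm{\Pi_R\vR}_R\le 1-\theta_R$ with $\theta_R\in(0,1]$. You should state the spanning-tree-plus-positive-diagonal hypothesis explicitly and cite or prove the corresponding spectral lemma; with that replacement the rest of your argument goes through unchanged (modulo the routine passage from the complex Schur form to a real inner product).
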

\begin{proof}
	See the supplementary material in \cite{Song2022CPP}.
\end{proof}
\begin{lemma}\label{Lem:Yk}
	For $\norm{\vY^{k}}_{F}^{2}$, we have
	\begin{equation}\label{YkNormLem}
	\begin{aligned}
	\norm{\vY^{k}}_{F}^{2}
	\leq& 3\norm{ \Pi_{C}\vY^{k} }_{C}^{2}
	+\frac{3\norm{ \vu_{C} }^{2}}{n} L^2 \norm{ \Pi_{R}\vX^{k} }_{R}^{2} \\
	&+3 \norm{\vu_{C}}^{2}\norm{\nabla f(\oX^{k})}^{2}.
	\end{aligned}
	\end{equation}
\end{lemma}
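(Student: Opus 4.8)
The plan is to split $\vY^{k}$ into its consensus part and its row-average, further isolate the exact gradient $\nabla f(\oX^{k})$, and then bound the three resulting pieces. First I would use the decomposition $\vY^{k}=\Pi_{C}\vY^{k}+\frac{\vu_{C}\vone^{\T}}{n}\vY^{k}=\Pi_{C}\vY^{k}+\vu_{C}\oY^{k}$, which follows directly from $\Pi_{C}=\vI-\frac{\vu_{C}\vone^{\T}}{n}$ and $\oY^{k}=\frac{1}{n}\vone^{\T}\vY^{k}$. The gradient-tracking identity already established in the text, $\vone^{\T}\vY^{k}=\vone^{\T}\nabla\vF(\vX^{k})$, lets me rewrite the row-average as $\oY^{k}=\frac{1}{n}\vone^{\T}\nabla\vF(\vX^{k})=\frac{1}{n}\sum_{i=1}^{n}\nabla f_i(\vx_i^{k})$, i.e. the average of the local gradients evaluated at the local iterates.

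Next I would introduce the exact gradient by writing $\oY^{k}=\big(\oY^{k}-\nabla f(\oX^{k})\big)+\nabla f(\oX^{k})$, producing the three-term expansion $\vY^{k}=\Pi_{C}\vY^{k}+\vu_{C}\big(\oY^{k}-\nabla f(\oX^{k})\big)+\vu_{C}\nabla f(\oX^{k})$. Applying the elementary inequality $\norm{\vA+\vB+\vD}_{F}^{2}\le 3(\norm{\vA}_{F}^{2}+\norm{\vB}_{F}^{2}+\norm{\vD}_{F}^{2})$ produces exactly the factor $3$ in the statement. The third term is immediate: since $\vu_{C}$ is a column vector and $\nabla f(\oX^{k})$ a row vector, the rank-one product factorizes as $\norm{\vu_{C}\nabla f(\oX^{k})}_{F}^{2}=\norm{\vu_{C}}^{2}\norm{\nabla f(\oX^{k})}^{2}$, matching the last term. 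For the first term I would invoke the construction of $\norm{\cdot}_{C}$ from Lemma~\ref{Norm_R_C}: after the (scale-invariant) normalization of $\widetilde{\vC}$, one has $\norm{\vZ}_{F}\le\norm{\vZ}_{C}$, so $\norm{\Pi_{C}\vY^{k}}_{F}^{2}\le\norm{\Pi_{C}\vY^{k}}_{C}^{2}$.

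The crux is the middle term, which converts the gradient-tracking deviation into the consensus error. Using $\norm{\vu_{C}\vb}_{F}=\norm{\vu_{C}}\norm{\vb}$ for a row vector $\vb$, it suffices to bound $\norm{\oY^{k}-\nabla f(\oX^{k})}^{2}$. Writing this difference as $\frac{1}{n}\sum_i\big(\nabla f_i(\vx_i^{k})-\nabla f_i(\oX^{k})\big)$, I would apply Jensen's inequality (convexity of $\norm{\cdot}^{2}$) to pull the average outside, then the $L_i$-Lipschitz bound \eqref{L-smooth} with $L_i\le L$, giving $\norm{\oY^{k}-\nabla f(\oX^{k})}^{2}\le\frac{L^{2}}{n}\sum_i\norm{\vx_i^{k}-\oX^{k}}^{2}=\frac{L^{2}}{n}\norm{\vX^{k}-\vone\oX^{k}}_{F}^{2}$. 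The key observation is that $\vX^{k}-\vone\oX^{k}=(\vI-\frac{\vone\vu_{R}^{\T}}{n})\vX^{k}=\Pi_{R}\vX^{k}$ by definition of $\oX^{k}=\frac{1}{n}\vu_{R}^{\T}\vX^{k}$, so after passing from $\norm{\cdot}_{F}$ to $\norm{\cdot}_{R}$ (again $\norm{\cdot}_{F}\le\norm{\cdot}_{R}$) this contribution becomes $\frac{3\norm{\vu_{C}}^{2}L^{2}}{n}\norm{\Pi_{R}\vX^{k}}_{R}^{2}$, exactly the middle term of \eqref{YkNormLem}.

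The main obstacle is bookkeeping rather than depth: one must keep straight which objects are $n\times p$ matrices, which are $1\times p$ row vectors, and which are column vectors in $\RR^{n}$, and use that the outer products $\vu_{C}\vb$ factorize as $\norm{\vu_{C}}\norm{\vb}$. The only genuinely structural points are the identity $\vX^{k}-\vone\oX^{k}=\Pi_{R}\vX^{k}$ and the normalization $\norm{\cdot}_{F}\le\norm{\cdot}_{R},\norm{\cdot}_{C}$ inherited from the construction in Lemma~\ref{Norm_R_C}; granted these, the three bounds assemble directly into \eqref{YkNormLem}.
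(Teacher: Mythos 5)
Your proposal is correct and follows essentially the same route as the paper: the identical three-term decomposition $\vY^{k}=\Pi_{C}\vY^{k}+\vu_{C}(\oY^{k}-\nabla f(\oX^{k}))+\vu_{C}\nabla f(\oX^{k})$, the factor-$3$ inequality, the rank-one factorization $\norm{\vu_{C}\vb}_{F}=\norm{\vu_{C}}\norm{\vb}$, and the gradient-tracking identity plus $L$-Lipschitzness to bound $\norm{\oY^{k}-\nabla f(\oX^{k})}^{2}$ by $\frac{L^{2}}{n}\norm{\Pi_{R}\vX^{k}}_{R}^{2}$. The only cosmetic difference is that you invoke Jensen's inequality where the paper bounds $\frac{1}{n}\norm{\vone^{\T}(\cdot)}\le\frac{1}{\sqrt{n}}\norm{\cdot}_{F}$ directly, which amounts to the same estimate.
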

\begin{proof}
	See Appendix \ref{Pf:Yk}.
\end{proof}
In the next lemma, we present the key linear system of inequalities. 
\begin{lemma}\label{Lem:RCPP}
	Suppose Assumptions \ref{Assumption: network}, \ref{Assumption: function} and \ref{Assumption:General} hold 
	and   $\hlambda\leq \min\big\{\frac{1}{6L},\frac{1}{6\sqrt{C}L}\big\}$. 
	Then we have
	\begin{align}\label{ineq:LMI}
	\vw^{k+1}\leq \vA \vw^{k} + \vb\norm{\vY^{k}}_{F}^{2}+\vzeta^{k},
	\end{align}
	where 
	$
	\vw^k =	\big[ 
	\Omega_{c}^{k}, 
	\Omega_{g}^{k}, 
	\Omega_{cx}^{k}, 
	\Omega_{cy}^{k}\big]^{\T}
	$ 
	and 
	$\vzeta^{k}=s_k^{2}\cdot\big[\zeta_{c},\zeta_{g},\zeta_{cx},\zeta_{cy}\big]^\T$.
\end{lemma}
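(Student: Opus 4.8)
The plan is to establish one scalar recursion for each of the four entries of $\vw^{k}$ and then stack them into \eqref{ineq:LMI}. The recurring tools are Young's inequality $\norm{\vu+\vv}_F^2\le(1+\eta)\norm{\vu}_F^2+(1+\eta^{-1})\norm{\vv}_F^2$ with parameters $\eta$ left free until the very end, the contraction estimates $\norm{\Pi_{R}\vR_{\gamma}}_{R}\le 1-\theta_R\gamma_x$ and $\norm{\Pi_{C}\vC_{\gamma}}_{C}\le 1-\theta_C\gamma_y$ from Lemma~\ref{Norm_R_C}, the two bounds of Assumption~\ref{Assumption:General} transported to the dynamic scaling operator $\cQ$ (which, as computed in Section~\ref{Sec: RCPP}, merely replaces $\sigma^2,\sigma_r^2$ by $s_k^2\sigma^2,s_k^2\sigma_r^2$), and the $L$-smoothness of Assumption~\ref{Assumption: function}.

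For the consensus error I would start from the equivalent form \eqref{Xkplus}, write $\vX^{k+1}=\vR_{\gamma}\tX^{k}-\gamma_x(\vI-\vR)(\widehat{\vX}^{k}-\tX^{k})$, apply $\Pi_{R}$, and combine the contraction of $\Pi_R\vR_\gamma$ with $\tX^{k}=\vX^{k}-\Lambda\vY^{k}$; this yields a contracted $\Omega_c^k$, a $\norm{\vY^k}_F^2$ term of order $\hlambda^2$, and a compression contribution $\EE\norm{\widehat{\vX}^{k}-\tX^{k}}_F^2$. Since $\widehat{\vX}^{k}-\tX^{k}=\cQ(\tX^k-\vH^k_{x})-(\tX^k-\vH^k_{x})$, the first bound of Assumption~\ref{Assumption:General} makes this at most $C\,\Omega_{cx}^k+n\,s_k^2\sigma^2$, giving the first row of $\vA$, the first entry of $\vb$, and $\zeta_c$. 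The gradient-tracking row follows analogously from \eqref{Ykplus} with $\tY^{k}=\vY^k+\nabla\vF(\vX^{k+1})-\nabla\vF(\vX^k)$: I apply $\Pi_C$ and the contraction of $\Pi_C\vC_\gamma$, bound $\norm{\nabla\vF(\vX^{k+1})-\nabla\vF(\vX^k)}_F\le L\norm{\vX^{k+1}-\vX^k}_F$ by smoothness, and express $\vX^{k+1}-\vX^k$ through the update so that it is controlled by $\Omega_c^k$, $\norm{\vY^k}_F^2$ and $\Omega_{cx}^k$, while $\widehat{\vY}^{k}-\tY^{k}$ again contributes $C\,\Omega_{cy}^k+n\,s_k^2\sigma^2$.

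The heart of the argument is the compression errors. For $\Omega_{cx}^{k+1}$ I split $\tX^{k+1}-\vH^{k+1}_{x}=(\tX^{k+1}-\tX^k)+(\tX^k-\vH^{k+1}_{x})$ by Young. Using $\vH^{k+1}_{x}=\vH^k_{x}+\alpha_x\cQ(\tX^k-\vH^k_{x})$ and the identity $\vz-\alpha_x\cQ(\vz)=(1-\alpha_x r)\vz+\alpha_x r(\vz-\cQ(\vz)/r)$ with $\vz=\tX^k-\vH^k_{x}$, convexity of $\norm{\cdot}_F^2$ together with the $r$-scaling bound of Assumption~\ref{Assumption:General} yields the genuine contraction $\EE\norm{\tX^k-\vH^{k+1}_{x}}_F^2\le(1-\alpha_x r\delta)\Omega_{cx}^k+\alpha_x r\,n\,s_k^2\sigma_r^2$ whenever $\alpha_x r\le1$, while the drift $\tX^{k+1}-\tX^k$ is bounded through the $\vX$-update and smoothness by $\Omega_c^k$, $\Omega_{cx}^k$ and $\norm{\vY^k}_F^2$. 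The recursion for $\Omega_{cy}^{k+1}$ is analogous, with the one wrinkle that $\Omega_{cy}$ is phrased in $\vY$ while $\cQ$ acts on $\tY^k-\vH^k_{y}$, so I rewrite $\tY^k-\vH^k_{y}=(\vY^k-\vH^k_{y})+(\nabla\vF(\vX^{k+1})-\nabla\vF(\vX^k))$ and absorb the gradient difference by smoothness. These produce the last two rows and $\zeta_{cx},\zeta_{cy}$.

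Finally I would collect the four inequalities, read off $\vA$, $\vb$ and $\vzeta^{k}=s_k^2[\zeta_c,\zeta_g,\zeta_{cx},\zeta_{cy}]^{\T}$, fix the Young parameters, and invoke $\hlambda\le\min\{1/(6L),1/(6\sqrt{C}L)\}$: the bound $1/(6L)$ keeps every smoothness-induced term of order $L\hlambda$ small, whereas $1/(6\sqrt{C}L)$ controls exactly those cross terms in which the drift is additionally amplified by the relative-error constant $C$ (whence the $\sqrt{C}$), so that all entries of $\vA$ and $\vb$ stay finite. I expect the main obstacle to be precisely the contraction of the compression errors, since it forces the simultaneous use of both inequalities in Assumption~\ref{Assumption:General} and a careful coordination of $\alpha_x,\alpha_y$, the scaling constant $r$, and the Young parameters, all while the drift terms feed the consensus and tracking errors back into the same recursion. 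A secondary but essential bookkeeping point is to verify that every absolute-error contribution is pre-multiplied by $s_k^2$ and is never amplified by $C$, so that the clean structure $\vzeta^k=s_k^2[\cdots]^{\T}$ is preserved.
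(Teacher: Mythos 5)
Your proposal follows essentially the same route as the paper's proof: the same four decompositions starting from \eqref{Xkplus}--\eqref{Ykplus}, the same contraction bounds from Lemma~\ref{Norm_R_C}, the same convex-combination identity combined with the $r$-scaling bound of Assumption~\ref{Assumption:General} to contract the compression errors at rate $1-\alpha r\delta$, and the same $s_k^2$-scaled bookkeeping of the absolute errors. The only slip is in the third recursion, where the drift $\tX^{k+1}-\tX^{k}=\vX^{k+1}-\vX^{k}-\Lambda(\vY^{k+1}-\vY^{k})$ also contains the $\vY$-update and therefore feeds $\Omega_{g}^{k}$ and $\Omega_{cy}^{k}$ into that row in addition to $\Omega_{c}^{k}$, $\Omega_{cx}^{k}$ and $\norm{\vY^{k}}_{F}^{2}$ --- harmless, since the linear system \eqref{ineq:LMI} admits a full matrix $\vA$.
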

\begin{proof}
	See Appendix \ref{Pf:LMI}.
\end{proof}
    The following descent lemma comes from the smoothness of the gradients and will be used for proving the main result together with Lemma \ref{Lem:RCPP}.
\begin{lemma}\label{Lem:descent}
    Suppose Assumption \ref{Assumption: function},  $\overline{\lambda}\leq\frac{1}{L}$ and $\overline{\lambda}\geq M \hlambda$ for some $M>0$ hold, we have
    	\begin{equation}\label{descent}
    	\begin{aligned}
    	&f(\oX^{k+1})\leq f(\oX^{k})-\frac{M\hlambda}{2}   \norm{\nabla f(\oX^{k})}^2 \\ 
    	&\qquad+E_1M\hlambda  L^2\norm{\Pi_{R}\vX^{k}}_{R}^{2}
    	+E_2M\hlambda\norm{\Pi_{C}\vY^{k}}_{C}^{2}, 
    \end{aligned}
    \end{equation}
    where  $E_1=\frac{\norm{\vu_{R}} \norm{\vu_{C}}}{n^2M}$ and $E_2=\frac{\norm{\vu_{R}}^{2}}{n^2 M^2}$. 
\end{lemma}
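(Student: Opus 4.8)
The plan is to track how the $\vu_{R}$-weighted average $\oX^{k}=\frac1n\vu_{R}^{\T}\vX^{k}$ evolves and then feed the resulting one-step recursion into the standard descent inequality for $L$-smooth functions. \textbf{First}, I would use the reduced update \eqref{Xkplus} together with $\vu_{R}^{\T}(\vI-\vR)=\vzero$ (which holds because $\vu_{R}$ is the left eigenvector of $\vR$ for eigenvalue $1$) to cancel the consensus/compression term entirely, obtaining the clean recursion $\oX^{k+1}=\oX^{k}-\frac1n\vu_{R}^{\T}\Lambda\vY^{k}$. \textbf{Next}, I would split $\vY^{k}$ along the range of $\vu_{C}$ and its complement, $\vY^{k}=\vu_{C}\oY^{k}+\Pi_{C}\vY^{k}$, and invoke the gradient-tracking identity $\oY^{k}=\frac1n\vone^{\T}\vY^{k}=\nabla\overline{\vF}(\vX^{k})$ established earlier in the excerpt. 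Since $\frac1n\vu_{R}^{\T}\Lambda\vu_{C}=\olambda$, this yields $\oX^{k+1}=\oX^{k}-\olambda\,\nabla\overline{\vF}(\vX^{k})-\frac1n\vu_{R}^{\T}\Lambda\Pi_{C}\vY^{k}$, which I read as a gradient step of size $\olambda$ along a perturbed gradient $\vg:=\nabla\overline{\vF}(\vX^{k})+\frac1{n\olambda}\vu_{R}^{\T}\Lambda\Pi_{C}\vY^{k}$, so that $\oX^{k+1}=\oX^{k}-\olambda\vg$.

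\textbf{Then} I would apply the descent lemma $f(\oX^{k+1})\le f(\oX^{k})+\langle\nabla f(\oX^{k}),\oX^{k+1}-\oX^{k}\rangle+\frac{L}{2}\norm{\oX^{k+1}-\oX^{k}}^{2}$, substitute $\oX^{k+1}-\oX^{k}=-\olambda\vg$, use $\olambda\le 1/L$ to replace $\frac{L\olambda^{2}}{2}$ by $\frac{\olambda}{2}$, and complete the square via $-\olambda\langle\nabla f(\oX^{k}),\vg\rangle+\frac{\olambda}{2}\norm{\vg}^{2}=-\frac{\olambda}{2}\norm{\nabla f(\oX^{k})}^{2}+\frac{\olambda}{2}\norm{\vg-\nabla f(\oX^{k})}^{2}$. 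The hypothesis $\olambda\ge M\hlambda$ then converts the negative term into $-\frac{M\hlambda}{2}\norm{\nabla f(\oX^{k})}^{2}$, matching the leading term of \eqref{descent}.

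\textbf{Finally}, the residual $\frac{\olambda}{2}\norm{\vg-\nabla f(\oX^{k})}^{2}$ must be absorbed into the two error terms. Writing $\vg-\nabla f(\oX^{k})=\big(\nabla\overline{\vF}(\vX^{k})-\nabla f(\oX^{k})\big)+\frac1{n\olambda}\vu_{R}^{\T}\Lambda\Pi_{C}\vY^{k}$ and using $\norm{\va+\vb}^{2}\le 2\norm{\va}^{2}+2\norm{\vb}^{2}$, I would control the first piece by $L$-smoothness through $\norm{\nabla\overline{\vF}(\vX^{k})-\nabla f(\oX^{k})}\le\frac{L}{\sqrt n}\norm{\Pi_{R}\vX^{k}}_{F}$ (each row of $\Pi_{R}\vX^{k}$ equals $\vx_{i}^{k}-\oX^{k}$, followed by Cauchy--Schwarz), and the second by $\norm{\vu_{R}^{\T}\Lambda\Pi_{C}\vY^{k}}\le\hlambda\norm{\vu_{R}}\norm{\Pi_{C}\vY^{k}}_{F}$. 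Combining these with the two-sided bounds $M\hlambda\le\olambda\le\frac{\hlambda}{n}\norm{\vu_{R}}\norm{\vu_{C}}$ (the upper estimate uses the nonnegativity of $\vu_{R},\vu_{C}$ and Cauchy--Schwarz) produces precisely the coefficients $E_{1}=\frac{\norm{\vu_{R}}\norm{\vu_{C}}}{n^{2}M}$ and $E_{2}=\frac{\norm{\vu_{R}}^{2}}{n^{2}M^{2}}$, after passing from the Frobenius norm to the norms $\norm{\cdot}_{R},\norm{\cdot}_{C}$ of Lemma~\ref{Norm_R_C}. I expect the main obstacle to be exactly this last bookkeeping: keeping the triangle/Cauchy--Schwarz constants tight enough that the two residual coefficients land on the stated $E_{1},E_{2}$ rather than on larger constants, and in particular reconciling the Frobenius norm that arises naturally in the estimates with the weighted norms $\norm{\cdot}_{R},\norm{\cdot}_{C}$ appearing in the statement.
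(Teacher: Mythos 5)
Your proposal is correct and follows essentially the same route as the paper's proof: the same recursion $\oX^{k+1}=\oX^{k}-\frac{1}{n}\vu_{R}^{\T}\Lambda\vY^{k}$, the same completing-the-square step (the paper phrases it via $\langle\va,\vb\rangle=\frac12(\norm{\va}^2+\norm{\vb}^2-\norm{\va-\vb}^2)$, which is identical), the same decomposition of the residual into $\nabla f(\oX^{k})-\oY^{k}$ and $\frac{1}{n\olambda}\vu_{R}^{\T}\Lambda\Pi_{C}\vY^{k}$, and the same two-sided bounds $M\hlambda\le\olambda\le\frac{\hlambda}{n}\norm{\vu_{R}}\norm{\vu_{C}}$ to arrive at $E_1$ and $E_2$. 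The final bookkeeping you flag as the main obstacle works out exactly as you describe, yielding the stated constants.
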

\begin{proof}
	See Appendix \ref{Pf:descent}.
\end{proof}

Based on the above lemmas, we demonstrate the linear convergence rate of RCPP for minimizing smooth objective functions satisfying the PL condition in the following theorem. 
\begin{theorem}\label{Thm:RCPP}
	Suppose Assumptions \ref{Assumption: network}, \ref{Assumption: function} and \ref{Assumption:General} hold, the scaling parameters $\alpha_x, \alpha_y \in (0, \frac{1}{r}]$, $\bar{\lambda}\geq M \hat{\lambda}$ for some $M>0$, 
	and the consensus step-sizes $\gamma_x,\gamma_y$ and the maximum step-size $\hlambda$ satisfy
\begin{align*}
&\hlambda \leq \Bigg\{\frac{1}{6L},\frac{1}{6\sqrt{C}L},\frac{1}{ML},
\frac{\sqrt{d_{42}}}{2\sqrt{  e_1 c_{32}c_{43}}}\frac{\theta_{C}}{L},
\\
&\min\Big\{
\frac{1}{2\sqrt{c_2}}\frac{\theta_{R}}{1-\theta_{R}\gamma_{x}},
\frac{\sqrt{c_{32}}}{\sqrt{c_3}},
\frac{\sqrt{d_{42}}}{\sqrt{c_4}},
\frac{\sqrt{d_{52}}}{\sqrt{c_5}}
\Big\}
\frac{1}{\sqrt{96E}\norm{\vu_{C}}}\frac{\gamma_{x}}{L},\\
&\min\Big\{
\frac{1}{2\sqrt{ c_2 c_{32}}}\frac{\theta_{R}}{1-\theta_{R}\gamma_{x}},
\frac{1}{ \sqrt{ c_3}},
\frac{\sqrt{d_{42}}}{\sqrt{ c_4c_{32}}},
\frac{\sqrt{d_{52}}}{ \sqrt{ c_5c_{32}}}
\Big\}
\frac{\theta_{C}\gamma_{y}}{\sqrt{48e_1}L}
\Bigg\},
\end{align*}
\begin{align*}
\gamma_{x} \leq& \Bigg\{1, 
\frac{\theta_{R}\alpha_x r \delta }{8\sqrt{ C d_{42}c_{24}}},
\frac{\sqrt{c_{32}} \alpha_x r \delta }{4\sqrt{ C d_{42}c_{34}}},
\frac{\alpha_x r \delta}{4\sqrt{C d_{44}}},
\frac{\sqrt{d_{52}} \alpha_x r \delta }{4\sqrt{C d_{42}d_{54}}},\\
&
\frac{\sqrt{M\norm{\vu_{C}}}}{2\sqrt{\norm{\vu_{R}}e_1 c_{32}}}\theta_{C}\gamma_y
\Bigg\},\\
\gamma_{y} \leq& \Bigg\{1, 
\frac{\theta_{C}e_2 d_{52}}{4 e_1 c_{32}d_{53}},
\frac{\theta_{C} (\alpha_y r \delta)^2 e_1 c_{32} }{16 C e_2 d_{52}c_{35}},
\frac{\theta_{C}(\alpha_y r \delta)^2 d_{42}}{16 C e_2 d_{52}c_{45}},\\
&
\frac{\alpha_y r \delta }{4\sqrt{C  d_{55}}}
\Bigg\},
\end{align*}
	Then, the optimization error $\Omega_{o}^{k}$ and the consensus error $\Omega_{c}^{k}$ both converge to 0 at the linear rate  $\mathcal{O}(c^{k})$, where $c\in(\tilde{\rho},1)$, where $\tilde{\rho}=\max\{1-\frac{1}{2}M\hlambda\mu,1-\frac{\theta_{R}\gamma_x}{16},1-\frac{\theta_{C}\gamma_y}{8},1-\frac{\alpha_x r \delta}{4},1-\frac{\alpha_y r \delta}{16}\}$, $s_k^2=c_0c^k$. 
\end{theorem}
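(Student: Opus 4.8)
The plan is to assemble the four lemmas into a single five-dimensional linear system of inequalities for the augmented error vector $\vu^{k}:=[\Omega_{o}^{k},\Omega_{c}^{k},\Omega_{g}^{k},\Omega_{cx}^{k},\Omega_{cy}^{k}]^{\T}$ and then certify that the associated nonnegative system matrix contracts. First I would eliminate the coupling term $\norm{\vY^{k}}_{F}^{2}$ from Lemma~\ref{Lem:RCPP} by substituting the bound of Lemma~\ref{Lem:Yk}; this rewrites the evolution of $\vw^{k}=[\Omega_{c}^{k},\Omega_{g}^{k},\Omega_{cx}^{k},\Omega_{cy}^{k}]^{\T}$ purely in terms of $\Omega_{c}^{k}$ and $\Omega_{g}^{k}$ (through $\norm{\Pi_{R}\vX^{k}}_{R}^{2}$ and $\norm{\Pi_{C}\vY^{k}}_{C}^{2}$), the compression errors, the absolute-error forcing $\vzeta^{k}=s_{k}^{2}[\zeta_{c},\zeta_{g},\zeta_{cx},\zeta_{cy}]^{\T}$, and an extra nonnegative multiple of $\norm{\nabla f(\oX^{k})}^{2}$. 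I would then adjoin the descent inequality of Lemma~\ref{Lem:descent} as the leading row. The crucial bookkeeping is the gradient term: the descent supplies $-\tfrac{M\hlambda}{2}\norm{\nabla f(\oX^{k})}^{2}$, and I would spend half of it to dominate the positive $\norm{\nabla f(\oX^{k})}^{2}$ feedback inherited from Lemma~\ref{Lem:Yk} (legitimate once $\hlambda\leq\tfrac{1}{6L}$, $\hlambda\leq\tfrac{1}{6\sqrt{C}L}$ and $\hlambda\leq\tfrac{1}{ML}$ hold), and convert the remaining half into a drift on $\Omega_{o}^{k}$ via the PL inequality $\norm{\nabla f(\oX^{k})}^{2}\geq 2\mu\,\Omega_{o}^{k}$. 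This produces a clean recursion $\vu^{k+1}\leq \vG\,\vu^{k}+\vzeta^{k}$ with $\vG\geq 0$ entrywise and with $1-\tfrac12 M\hlambda\mu$ in its leading diagonal slot.

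The central step is to show $\rho(\vG)\leq\tilde{\rho}$. Rather than compute eigenvalues, I would exhibit a strictly positive test vector $\vc$ with $\vG\vc\leq\tilde{\rho}\,\vc$ componentwise; by the Collatz--Wielandt / Perron--Frobenius bound for nonnegative matrices this subinvariance immediately yields $\rho(\vG)\leq\max_{i}[\vG\vc]_{i}/c_{i}\leq\tilde{\rho}$. The auxiliary constants $c_{2},c_{3},c_{4},c_{5},c_{32},c_{24},c_{34},c_{35},c_{45},d_{42},d_{44},d_{52},d_{53},d_{54},d_{55},e_{1},e_{2}$ appearing in the step-size conditions are exactly the entries of $\vG$ together with the coordinates of a compatible $\vc$, and each of the five scalar inequalities $[\vG\vc]_{i}\leq\tilde{\rho}\,c_{i}$ collapses, after isolating the dominant terms, to one of the stated bounds. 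Concretely, the $\Omega_{c}^{k}$ row enforces the $1-\theta_{R}\gamma_{x}$ contraction and the $\tfrac{1}{2\sqrt{c_2}}\tfrac{\theta_{R}}{1-\theta_{R}\gamma_{x}}$-type bounds; the $\Omega_{g}^{k}$ row enforces the $1-\theta_{C}\gamma_{y}$ contraction and the $\theta_{C}\gamma_{y}$-weighted bounds; the $\Omega_{cx}^{k},\Omega_{cy}^{k}$ rows enforce the $1-\tfrac{\alpha_{x}r\delta}{4}$ and $1-\tfrac{\alpha_{y}r\delta}{16}$ contractions together with the $\sqrt{C}$-weighted $\gamma_{x},\gamma_{y}$ bounds; and the leading row reproduces $1-\tfrac12 M\hlambda\mu$. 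Verifying these inequalities line by line, and checking that each dominant off-diagonal coupling is at most the slack between $\tilde{\rho}$ and the relevant diagonal contraction times $c_{i}$, is the main technical obstacle: this is precisely where the otherwise opaque step-size conditions are consumed, and getting every coupling to fit requires the products of bounds (e.g.\ the mixed $\gamma_{x}\gamma_{y}$ constraints) to balance the $\sqrt{C}$-scaled absolute-error terms.

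Finally I would pass from the matrix recursion to the scalar rate. Since the augmented forcing $s_{k}^{2}[0,\zeta_{c},\zeta_{g},\zeta_{cx},\zeta_{cy}]^{\T}$ points in a fixed direction, I choose $\eta>0$ so that this direction is dominated by $\eta\,\vc$, whence $\vzeta^{k}\leq\eta c_{0}c^{k}\vc$ using $s_{k}^{2}=c_{0}c^{k}$. Writing $\vu^{k}\leq\beta_{k}\vc$ with $\beta_{0}=\max_{i}u_{i}^{0}/c_{i}$, the subinvariance $\vG\vc\leq\tilde{\rho}\vc$ turns the vector recursion into the scalar one $\beta_{k+1}\leq\tilde{\rho}\beta_{k}+\eta c_{0}c^{k}$. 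Because $c\in(\tilde{\rho},1)$, unrolling gives $\beta_{k}\leq\tilde{\rho}^{k}\beta_{0}+\eta c_{0}\sum_{j=0}^{k-1}\tilde{\rho}^{k-1-j}c^{j}\leq\tilde{\rho}^{k}\beta_{0}+\tfrac{\eta c_{0}}{c-\tilde{\rho}}c^{k}=\mathcal{O}(c^{k})$, where $c>\tilde{\rho}$ is exactly what makes the geometric convolution summable. Consequently every component of $\vu^{k}$, in particular $\Omega_{o}^{k}$ and $\Omega_{c}^{k}$, decays as $\mathcal{O}(c^{k})$, proving the theorem. I would close by remarking that the decaying dynamic scaling $s_{k}\to 0$ is what renders $\vzeta^{k}$ geometrically summable against the contraction; without it the forcing would be constant and $\Omega_{o}^{k},\Omega_{c}^{k}$ could only reach a neighborhood of the optimum, consistent with the ``neighborhood'' entry reported for \cite{nassif2022quantization} in Table~\ref{table:Categorization}.
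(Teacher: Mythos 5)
Your proposal is correct and follows essentially the same route as the paper: substitute Lemma~\ref{Lem:Yk} into Lemma~\ref{Lem:RCPP} to eliminate $\norm{\vY^{k}}_{F}^{2}$, adjoin the descent inequality of Lemma~\ref{Lem:descent}, spend part of the gradient decrease on cancelling the $\norm{\nabla f(\oX^{k})}^{2}$ feedback and the rest on the PL drift, and unroll the resulting contraction against the geometrically decaying forcing $s_k^2=c_0c^k$ with $c>\tilde{\rho}$. The only cosmetic difference is that the paper certifies the contraction through a weighted Lyapunov sum $U^{k}+\frac{EM\hlambda}{\beta}V^{k}$ (a positive left test vector) rather than your right subinvariant vector $\vG\vc\leq\tilde{\rho}\,\vc$; these are dual forms of the same Perron--Frobenius argument and lead to the identical scalar recursion and geometric-sum bound.
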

\begin{proof}
	See Appendix \ref{Pf:RCPP}.
\end{proof}

\begin{remark}
	It is worth nothing that the linear convergence of RCPP does not depend on the decaying assumption of the absolute compression error as in \cite{michelusi2022finitebit}. 
\end{remark}

\section{Numerical Examples}\label{Sec: Simulation}
    In this section, we provide some numerical results to confirm the theoretical findings. Consider the following ridge regression problem,
\begin{align}\label{Ridge Regression} 
	\min_{x\in \mathbb{R}^{p}}f(x)=\frac{1}{n}\sum_{i=1}^nf_i(x)\left(=\left(u_i^{\T} x-v_i\right)^2+\rho\|x\|^2\right),
\end{align}
   where $\rho>0$ is a penalty parameter. The pair $(u_i,v_i)$ is a sample data that belongs to the $i$-th agent, where $u_i\in\mathbb{R}^p$ represents the features, and $v_i\in\mathbb{R}$ represents the observations or outputs. These parameter settings are the same as in \cite{Liao2022CGT}.

   We compare RCPP wih CPP \cite{Song2022CPP} and QDGT \cite{xiong2022quantized} for decentralized optimization over a directed graph. The row-stochastic and column-stochastic weights are randomly generated. Regarding the compressor, we consider an adaptation from the $b$-bits $\infty$-norm quantization compression method in \cite{liu2020linear}, stated below:
   \begin{align}\label{Quant}
   \cQ(\vx)=\frac{h(\|\vx\|_{\infty})} {2^{b-1}} \text{sign}(\vx)  \odot \left \lfloor{\frac{2^{b-1}|\vx|}{\|\vx\|_{\infty}}+\vu}\right \rfloor,
   \end{align}
   where $\text{sign}(\vx)$ is the sign function, $\odot$ is the Hadamard product, $|\vx|$ is the element-wise absolute value of $\vx$, and $\vu$ is a random perturbation vector uniformly distributed in $[0, 1]^{p}$. 
   
   Compared with the original compressor in \cite{liu2020linear} which computes $\|\vx\|_{\infty}$, the new operator uses the mapping $h(\|\vx\|)$ which is a random variable such that $h(\|\vx\|)=\lfloor \|\vx\| \rfloor+1$ with probability $\|\vx\|-\lfloor \|\vx\| \rfloor$ and $h(\|\vx\|)=\lfloor \|\vx\| \rfloor$ otherwise. 
   By considering $h(\|\vx\|)$, only dynamic finite bits, i.e., about $\log_{2}(\lfloor \|\vx\| \rfloor+1)+1$ bits, need to be transmitted for nonzero norms. 
   The quantization with the new mapping $h(\|\vx\|)$ is abbreviated as Qn, and QTn denotes the composition of quantization and Top-k compressor with the same operation. 
   Note that these compression operators produce absolute compression errors and satisfy Assumption \ref{Assumption:General}, and QTn does not satisfy the previous conditions on the compression operators. In the simulation we let $b=2$. In addition to Qn and QTn, we also consider the quantizer compression in \cite{xiong2022quantized,Kajiyama2020Linear} which satisfies the absolute compression error assumption. The quantized level is $1$, i.e., the quantized values belong to $\{-1,0,1\}$.

   In Fig. \ref{Figs}(a), we compare the residuals of CPP, RCPP and QDGT against the number of iterations. It can be seen that the performance of CPP deteriorates due to the absolute compression error. Meanwhile, RCPP outperforms QDGT under different compression methods. 

   From Fig. \ref{Figs}(b) where we further compare the performance of the algorithms against the communication bits, we find that RCPP performs better than the other methods under different compression methods. Besides, RCPP with QTn achieves the best communication efficiency. This implies that by considering Assumption \ref{Assumption:General} which provides us with more choices on the compression operators, RCPP may achieve better performance under a specific choice of compression method with less communication (which may not satisfy the previous assumptions).
\begin{figure}[htbp]
	\centering
	\vspace{-1.5em}
	\subfigure[]{ \includegraphics[scale=0.31]{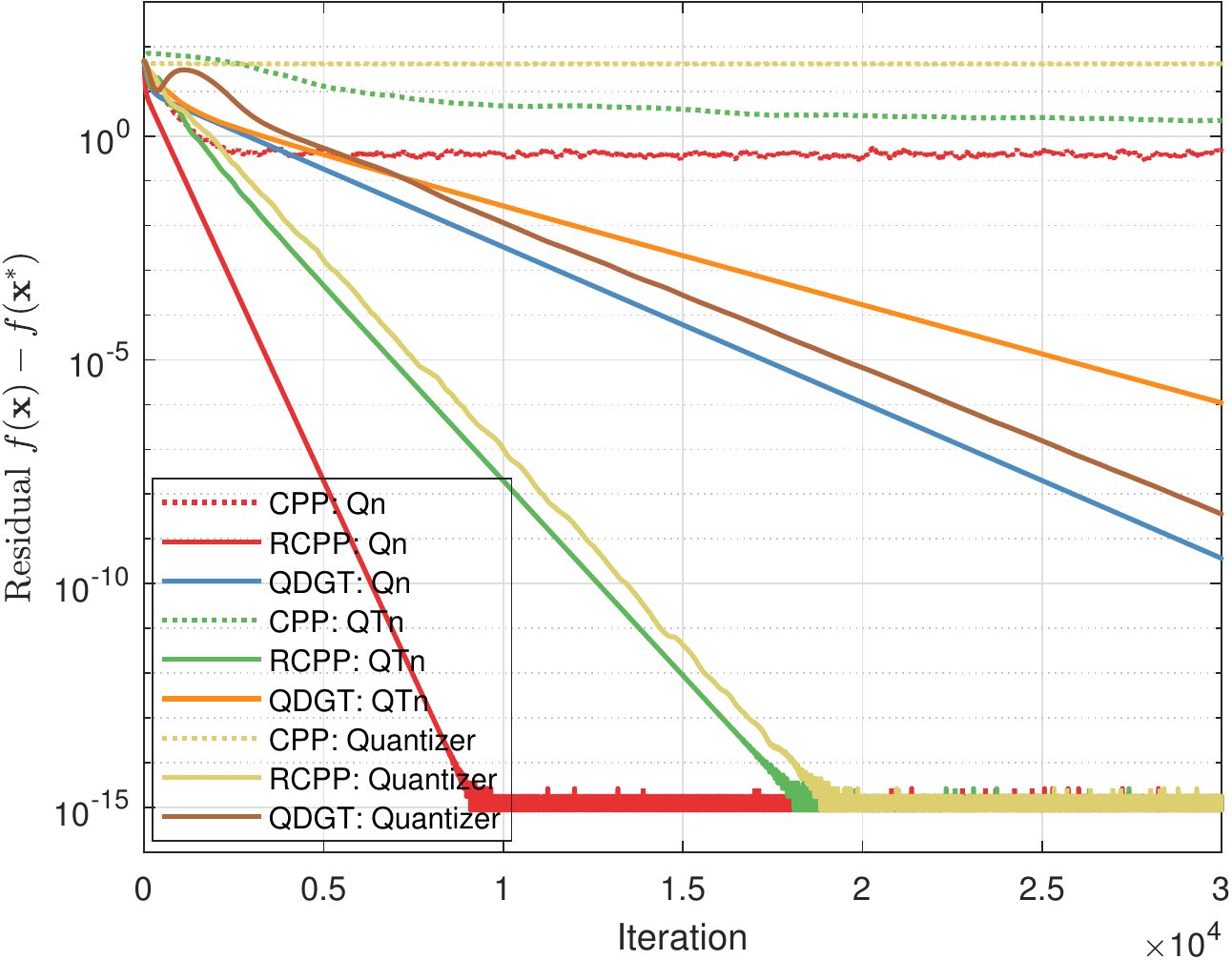} }%
    \hfill
	\subfigure[]{ \includegraphics[scale=0.31]{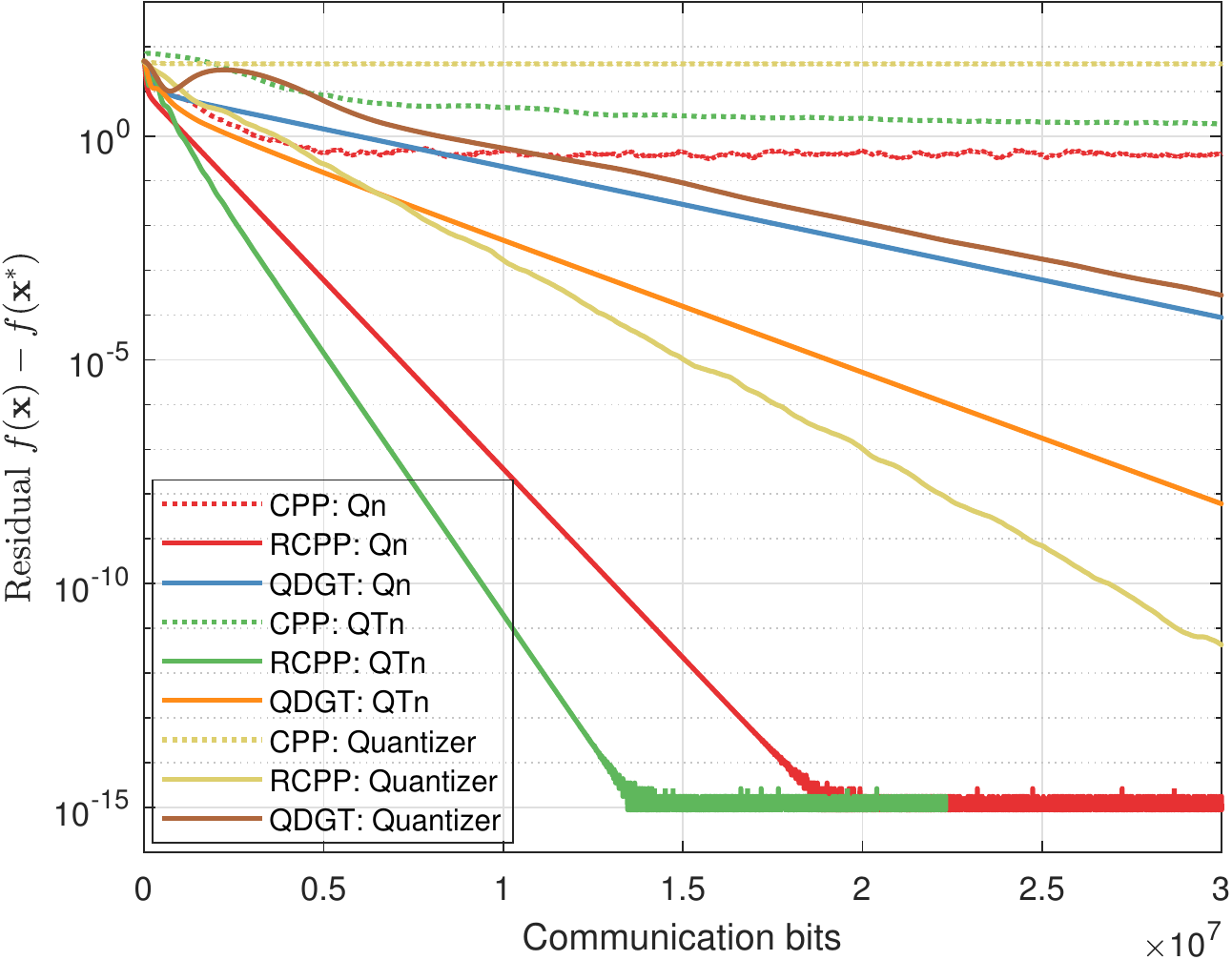} }
    \vspace{-1em}
	\caption{Residuals $\EE\big[f(\ox^{k})-f(\vx^*)\big]$ against the number of iterations and communication bits respectively for CPP, RCPP and QDGT under different compression methods.}
	\label{Figs}
\end{figure}

\section{Conclusions}\label{Sec: Conclusion}
    This article considers decentralized optimization with communication compression over directed networks. Specifically, we consider a general class of compression operators that allow both relative and  absolute compression errors. For smooth objective functions satisfying the PL condition, we propose a robust compressed push-pull algorithm, termed RCPP, which converges linearly. Numerical results demonstrate that RCPP is efficient and robust to various compressors.
    
    \bibliographystyle{IEEEtran}
    \bibliography{lywbib,RCPP}	

\appendix
\section{Proofs}\label{Appendix1}
\subsection{Supplementary Lemmas}
\begin{lemma}\label{lem:UV}
	For  $\vU,\vV\in \RR^{n\times p}$ and any constant $\tau>0$,   we have the following inequality:
	\begin{align}
	\|\vU+\vV\|^2\leq (1+\tau)\|\vU\|^2 + (1+\frac{1}{\tau})\|\vV\|^2.
	\end{align}
	In particular, taking $1+\tau=\frac{1}{1-\tau_1}, 0<\tau_1<1$ and $1+\tau=\tau_2, \tau_2>1$, we have
	\begin{align}\label{ineq:UV1}
	\|\vU+\vV\|^2\leq \frac{1}{1-\tau_1} \|\vU\|^2 + \frac{1}{\tau_1}\|\vV\|^2
	\end{align}
	and 	
	\begin{align}\label{ineq:UV2}
	\|\vU+\vV\|^2\leq \tau_2 \|\vU\|^2 + \frac{\tau_2}{\tau_2-1}\|\vV\|^2.
	\end{align}
	In addition, for any $\vU_1,\vU_2,\vU_3 \in \RR^{n\times p}$ and $\tau'>1$, we have 
	$
	\|\vU_1+\vU_2+\vU_3\|^2\leq \tau'\|\vU_1\|^2 + \frac{2\tau'}{\tau'-1}\left[\|\vU_2\|^2+\|\vU_3\|^2\right]
	$ 	
	and 
	$
	\|\vU_1+\vU_2+\vU_3\|^2\leq 3\|\vU_1\|^2 + 3\|\vU_2\|^2+3\|\vU_3\|^2.
	$
\end{lemma}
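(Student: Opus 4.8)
The plan is to establish the master inequality by a direct expansion of the squared norm together with a single application of Young's inequality, and then to obtain every subsequent claim as an algebraic specialization of it. First I would expand
\begin{equation}
\|\vU+\vV\|^2 = \|\vU\|^2 + 2\langle \vU,\vV\rangle + \|\vV\|^2,
\end{equation}
where $\langle\cdot,\cdot\rangle$ denotes the Frobenius inner product. The cross term is controlled by Young's inequality in the form $2\langle \vU,\vV\rangle \le \tau\|\vU\|^2 + \tfrac{1}{\tau}\|\vV\|^2$ for any $\tau>0$, which follows from $0 \le \|\sqrt{\tau}\,\vU - \tfrac{1}{\sqrt{\tau}}\,\vV\|^2$. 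Substituting this bound gives exactly $\|\vU+\vV\|^2 \le (1+\tau)\|\vU\|^2 + (1+\tfrac{1}{\tau})\|\vV\|^2$, which is the first assertion.

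Next I would derive the two reparametrized forms purely by substitution. Setting $1+\tau = \tfrac{1}{1-\tau_1}$ with $0<\tau_1<1$ forces $\tau = \tfrac{\tau_1}{1-\tau_1}$, whence $1+\tfrac{1}{\tau} = 1 + \tfrac{1-\tau_1}{\tau_1} = \tfrac{1}{\tau_1}$, yielding \eqref{ineq:UV1}. Setting instead $1+\tau = \tau_2$ with $\tau_2>1$ gives $\tau = \tau_2-1$ and $1+\tfrac{1}{\tau} = \tfrac{\tau_2}{\tau_2-1}$, yielding \eqref{ineq:UV2}. Both are immediate once the master inequality is in hand, so no further analytic work is needed.

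For the three-term bounds I would apply the two-term inequality twice. Grouping $\vU_1 + (\vU_2+\vU_3)$ and using \eqref{ineq:UV2} with parameter $\tau'>1$ gives
\begin{equation}
\|\vU_1+\vU_2+\vU_3\|^2 \le \tau'\|\vU_1\|^2 + \tfrac{\tau'}{\tau'-1}\|\vU_2+\vU_3\|^2,
\end{equation}
and then bounding $\|\vU_2+\vU_3\|^2 \le 2\|\vU_2\|^2 + 2\|\vU_3\|^2$ (the standard case $\tau=1$ of the master inequality) produces the first three-term claim. Specializing $\tau'=3$ makes $\tfrac{\tau'}{\tau'-1}=\tfrac{3}{2}$, so $\tfrac{2\tau'}{\tau'-1}=3$, which gives the final symmetric bound $3\|\vU_1\|^2+3\|\vU_2\|^2+3\|\vU_3\|^2$.

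I do not anticipate any genuine obstacle, since every step is an elementary algebraic identity or a single invocation of Young's inequality; the only point requiring mild care is verifying that the substitutions defining $\tau$ in terms of $\tau_1$ and $\tau_2$ remain positive (guaranteed by $0<\tau_1<1$ and $\tau_2>1$ respectively), so that the master inequality applies legitimately in each specialization.
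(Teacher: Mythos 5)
Your proof is correct in every detail: the expansion of the squared Frobenius norm plus Young's inequality on the cross term gives the master bound, the two reparametrizations check out algebraically, and the three-term bounds follow correctly from grouping $\vU_1+(\vU_2+\vU_3)$ with \eqref{ineq:UV2} and the $\tau=1$ case (with $\tau'=3$ indeed collapsing $\frac{2\tau'}{\tau'-1}$ to $3$). The paper itself states this lemma without proof, treating it as standard, and your argument is exactly the canonical derivation it implicitly relies on.
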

\begin{lemma}\label{lem:tau}
	For any $\tau\in\RR_{+}$, there holds
	\begin{align}
	(1+\frac{\tau}{2})(1-\tau)\leq 1-\frac{\tau}{2}
	\end{align}
\end{lemma}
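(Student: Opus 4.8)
The plan is to reduce the claimed inequality to the manifestly true statement $\tau^2 \geq 0$ by a single direct expansion. First I would multiply out the left-hand side, obtaining
\begin{equation*}
\left(1+\frac{\tau}{2}\right)(1-\tau) = 1 - \tau + \frac{\tau}{2} - \frac{\tau^2}{2} = 1 - \frac{\tau}{2} - \frac{\tau^2}{2}.
\end{equation*}
Subtracting the right-hand side $1 - \frac{\tau}{2}$ from both sides then converts the assertion into the equivalent inequality $-\frac{\tau^2}{2} \leq 0$.

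Since $\tau \in \RR_{+}$, we certainly have $\tau^2 \geq 0$, whence $-\frac{\tau^2}{2} \leq 0$, and the claim follows. I would note in passing that the hypothesis $\tau \geq 0$ plays no role in this final step: the inequality in fact holds for every $\tau \in \RR$, since the slack term $-\frac{\tau^2}{2}$ is nonpositive regardless of the sign of $\tau$.

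There is essentially no obstacle to overcome here; the result is a one-line algebraic identity followed by the trivial observation that a negative multiple of a square is nonpositive. The only point worth flagging is that the bound is tight exactly at $\tau = 0$, where both sides equal $1$, and strict for all $\tau \neq 0$. This quadratic slack $\frac{\tau^2}{2}$ is precisely the feature that makes the lemma convenient when it is invoked in the contraction estimates of the main convergence analysis, where a factor of the form $(1+\frac{\tau}{2})(1-\tau)$ needs to be absorbed into a clean linear decay factor $1-\frac{\tau}{2}$.
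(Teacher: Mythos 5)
Your proof is correct: the expansion $(1+\frac{\tau}{2})(1-\tau)=1-\frac{\tau}{2}-\frac{\tau^2}{2}$ reduces the claim to $-\frac{\tau^2}{2}\leq 0$, which is exactly the intended one-line argument — the paper states this lemma without proof, treating it as immediate. Your side remarks (that the inequality actually holds for all $\tau\in\RR$ and is tight only at $\tau=0$) are accurate and harmless, though not needed.
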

Note that Lemma \ref{lem:UV} and \ref{lem:tau} will be frequently used in the proof for the linear system of inequalities in Lemma \ref{Lem:RCPP}.

\subsection{Some Algebraic Results}
From some simple algebraic operations, we know $\Pi_{R} \vR=\vR\Pi_{R}=\vR-\frac{\vone \vu_{R}^{\T}}{n}$, $\Pi_{C} \vC=\vC\Pi_{C}=\vC-\frac{\vu_{C}\vone ^{\T}}{n}$, $\Pi_{R}\vX^{k}=\vX^{k}-\vone \oX^{k}$, $\Pi_{C}\vY^{k}=\vY^{k}- \vu_{C}\oY^{k}$, $(\vI-\vR)\Pi_{R}=\Pi_{R}(\vI-\vR)=\vI-\vR$, and $\Pi_{C}(\vI-\vC)=(\vI-\vC)\Pi_{C}=\vI-\vC$.
Before deriving the linear system of inequalities in Lemma \ref{Lem:RCPP}, we need some preliminary results on $\oX^{k+1}$, $\Pi_{R}\vX^{k+1}$, and $\Pi_{C}\vY^{k+1}$.

First, from the equivalent formula \eqref{Xkplus}, we know
\begin{align}\label{oXk}	
\nonumber	&\oX^{k+1}\\
\nonumber=&\frac{1}{n}\vu_{R}^{\T}\tX^{k}-\gamma_x \frac{1}{n}\vu_{R}^{\T} (\vI-\vR)\widehat{\vX}^{k}\\
\nonumber	=&\oX^{k}-\frac{1}{n}\vu_{R}^{\T}\Lambda \vY^{k}\\
\nonumber	=&\oX^{k}-\frac{1}{n}\vu_{R}^{\T}\Lambda (\vY^{k}-\vu_{C}\oY^{k}+\vu_{C}\oY^{k})\\
\nonumber	=&\oX^{k}-\overline{\lambda} \oY^{k}-\frac{1}{n}\vu_{R}^{\T}\Lambda \Pi_{C}\vY^{k} \\
\nonumber=&\oX^{k}-\overline{\lambda}\nabla f(\oX^{k})+\overline{\lambda}(\nabla f(\oX^{k})-\oY^{k}) \\
	&-\frac{1}{n}\vu_{R}^{\T}\Lambda \Pi_{C}\vY^{k},
\end{align}
where $\oX^{k}=\frac{1}{n}\vu_{R}^{\T}\vX^{k}$, $\overline{\lambda}=\frac{1}{n}\vu_{R}^\T\Lambda \vu_{C}$ and the fact $\vu_{R}^{\T} (\vI-\vR)=\vzero$ is used in the second equality.

Second, for $\Pi_{R}\vX^{k}=\vX^{k}-\vone \oX^{k}$, we have
\begin{align}
\nonumber	&\Pi_{R}\vX^{k+1}\\
\nonumber=&\Pi_{R}\tX^{k}-\gamma_x \Pi_{R} (\vI-\vR)\widehat{\vX}^{k}\\
\nonumber	=&(\vI-\gamma_x(\vI-\vR))\Pi_{R}\tX^{k}+\gamma_x  (\vI-\vR)\Pi_{R}(\tX^{k}-\widehat{\vX}^{k})\\
\nonumber	=&\Pi_{R}\vR_{\gamma}\Pi_{R}\vX^{k}-\Pi_{R}\vR_{\gamma}\Lambda\vY^{k}\\
\label{PiRXk}&+\gamma_x  (\vI-\vR)(\tX^{k}-\widehat{\vX}^{k}),
\end{align}
where $\vR_{\gamma}=\vI-\gamma_x(\vI-\vR)$ and the relation
$\Pi_{R}\vR_{\gamma}\Pi_{R}=\Pi_{R}\vR_{\gamma}=\vR_{\gamma}\Pi_{R}$ is adopted for the third equality.

Finally, for $\Pi_{C}\vY^{k}=\vY^{k}-\vu_{C} \oY^{k}$, we have
\begin{align}
\nonumber \Pi_{C}\vY^{k+1}=&\Pi_{C}(\tY^{k}-\gamma_{y}(\vI-\vC)\widehat{\vY}^{k})\\
\nonumber =&(\vI-\gamma_{y}(\vI-\vC))\Pi_{C}\tY^{k}+\gamma_{y}(\vI-\vC)(\tY^{k}-\widehat{\vY}^{k})\\
\nonumber=&\Pi_{C}\vC_{\gamma}\Pi_{C}\vY^{k}+\Pi_{C}\vC_{\gamma}(\nabla\vF(\vX^{k+1})-\nabla\vF(\vX^{k}))\\
\label{PiCYk}&+\gamma_{y}(\vI-\vC)(\tY^{k}-\widehat{\vY}^{k}),
\end{align}
where $\vC_{\gamma}=\vI-\gamma_y(\vI-\vC)$ and the relation
$\Pi_{C}\vC_{\gamma}\Pi_{C}=\Pi_{C}\vC_{\gamma}=\vC_{\gamma}\Pi_{C}$ is adopted for the third equality.

\subsection{Proof of Lemma \ref{Lem:Yk}}\label{Pf:Yk}
Based on Lemma \ref{lem:UV}, we obtain
\begin{align}\label{YkNorm}
\nonumber\norm{\vY^{k}}_{F}^{2}
=&\norm{\vY^{k}-\vu_{C}\oY^{k}+\vu_{C}(\oY^{k}-\nabla f(\oX^{k}))+\vu_{C}\nabla f(\oX^{k})}_{F}^{2}\\
\nonumber	\leq& 3\norm{ \vY^{k}-\vu_{C}\oY^{k} }_{F}^{2}+3\norm{ \vu_{C}(\oY^{k}-\nabla f(\oX^{k}))}_{F}^{2}\\
\nonumber&+3\norm{\vu_{C}\nabla f(\oX^{k}) }_{F}^{2}\\
\nonumber\leq& 3\norm{ \Pi_{C}\vY^{k} }_{C}^{2}+\frac{3\norm{ \vu_{C} }^{2}}{n} L^2 \norm{ \Pi_{R}\vX^{k} }_{R}^{2} \\
&+3 \norm{\vu_{C}}^{2}\norm{\nabla f(\oX^{k})}^{2},
\end{align}
where we use the fact that
\begin{align}
\nonumber	&\norm{\nabla f(\oX^{k})-\oY^{k}}\\
\nonumber	\leq& \frac{1}{n}\norm{\vone^{\T}\nabla\vF(\vone \oX^{k})-\vone^{\T}\nabla\vF( \vX^{k})}\\
\leq&\frac{L}{\sqrt{n}}\norm{\vX^{k}-\vone \oX^{k}}_{F}\leq \frac{L}{\sqrt{n}}\norm{\Pi_{R}\vX^{k}}_{R}.
\end{align}

\subsection{Proof of Lemma \ref{Lem:RCPP}}\label{Pf:LMI}
For simplicity, denote $\cF^k$ as the $\sigma$-algebra generated by $\{\vX^0,\vY^0,\vX^1,\vY^1,\cdots,\vX^{k},\vY^{k}\}$, and define $\EE[ \cdot |\cF^k]$ as the conditional expectation with respect to the compression operator given $\cF^k$.
\subsubsection{First inequality}
From \eqref{PiRXk}, we have
\begin{align}\label{PiRXkNorm}
\nonumber	&\norm{\Pi_{R}\vX^{k+1}}_{R}^{2}\\
\nonumber	=&\norm{\Pi_{R}\vR_{\gamma}\Pi_{R}\vX^{k}-\Pi_{R}\vR_{\gamma}\Lambda\vY^{k}+\gamma_x  (\vI-\vR)(\tX^{k}-\widehat{\vX}^{k})}_{R}^{2}\\
\nonumber	\leq&(1-\theta_{R}\gamma_x)\norm{\Pi_{R}\vX^{k}}_{R}^{2}\\
\nonumber &+\frac{2}{\theta_{R}\gamma_x}\Big( (1-\theta_{R}\gamma_x)^{2}\norm{\Lambda\vY^{k}}_{R}^{2}\\ 
\nonumber &+\gamma_x^2\norm{\vI-\vR}_{R}^{2}\norm{\tX^{k}-\widehat{\vX}^{k}}_{R}^{2}\Big)\\
\nonumber\leq&(1-\theta_{R}\gamma_x)\norm{\Pi_{R}\vX^{k}}_{R}^{2}\\
\nonumber&+\frac{2}{\theta_{R}\gamma_x}\Big( (1-\theta_{R}\gamma_x)^{2}\delta_{R,2}^2\norm{\Lambda}_{2}^{2}\norm{\vY^{k}}_{F}^{2}\\  &+\delta_{R,2}^2\gamma_x^2\norm{\vI-\vR}_{R}^{2}\norm{\tX^{k}-\widehat{\vX}^{k}}_{F}^{2}\Big),
\end{align}
where the first inequality is due to Lemma \ref{lem:UV} with $\tau_1=\theta_{R}\gamma_x$ and $\norm{\Pi_{R}\vR_{\gamma}}_{R}\leq 1-\theta_{R}\gamma_x$. 

Taking the conditional expectation on $\cF^{k}$ yields
\begin{align}\label{PiXkb1}
\nonumber	&\EE[\norm{\Pi_{R}\vX^{k+1}}_{R}^{2}\vert \cF^{k}]\\
\nonumber	\leq&(1-\theta_{R}\gamma_x)\norm{\Pi_{R}\vX^{k}}_{R}^{2}\\
\nonumber &+\frac{2}{\theta_{R}\gamma_x}\Bigg[ (1-\theta_{R}\gamma_x)^{2}\delta_{R,2}^2\norm{\Lambda}_{2}^{2}\norm{\vY^{k}}_{F}^{2}\\ \nonumber&+\delta_{R,2}^2\gamma_x^2\norm{\vI-\vR}_{R}^{2}C\norm{\tX^{k}-\vH_{x}^{k}}_{F}^{2}\\
&+\delta_{R,2}^2\gamma_x^2\norm{\vI-\vR}_{R}^{2} s_k^{2}\sigma^2\Bigg],
\end{align}
where we use the fact that
\begin{align}\label{tXk_hXkb}
\EE[\norm{\tX^{k}-\widehat{\vX}^{k}}_{F}^{2} \vert \cF^{k}]\leq& C \norm{\tX^{k}-\vH_{x}^{k}}_{F}^{2}+s_k^{2}\sigma^2.
\end{align}
For convenience, we rewrite \eqref{PiXkb1} as
\begin{align}\label{PiXkb2}
\nonumber	&\EE[\norm{\Pi_{R}\vX^{k+1}}_{R}^{2}\vert \cF^{k}]\\
\nonumber \leq&(1-\theta_{R}\gamma_x)\norm{\Pi_{R}\vX^{k}}_{R}^{2}\\
\nonumber	&+\frac{1}{\theta_{R}\gamma_x}\Bigg[ c_2(1-\theta_{R}\gamma_x)^{2}\hat{\lambda}^{2}\norm{\vY^{k}}_{F}^{2}\\ &+c_{24}C\gamma_x^2\norm{\tX^{k}-\vH_{x}^{k}}_{F}^{2}\Bigg]+ s_k^{2}\zeta_{c},
\end{align}
where $c_2= \delta_{R,2}^2$,
$c_{24}=2\delta_{R,2}^2\norm{\vI-\vR}_{R}^{2}$ and 
$\zeta_{c}= 2\sigma^2/\theta_{R}\delta_{R,2}^2\norm{\vI-\vR}_{R}^{2}
\ge 2\sigma^2\delta_{R,2}^2\norm{\vI-\vR}_{R}^{2}\gamma_x^2/(\theta_{R}\gamma_x)$.

\subsubsection{Second  inequality}
By relation \eqref{PiCYk}, we obtain
\begin{align}\label{PiCYkNorm}
\nonumber	&\norm{\Pi_{C}\vY^{k+1}}_{C}^{2}\\
\nonumber	=&\Big\Vert \Pi_{C}\vC_{\gamma}\Pi_{C}\vY^{k}+\Pi_{C}\vC_{\gamma}(\nabla\vF(\vX^{k+1})-\nabla\vF(\vX^{k}))\\
\nonumber &	+\gamma_{y}(\vI-\vC)(\tY^{k}-\widehat{\vY}^{k})\Big\Vert_{C}^{2}\\
\nonumber	\leq&(1-\theta_{C}\gamma_y)\norm{\Pi_{C}\vY^{k}}_{C}^{2}\\
\nonumber &+\frac{2}{\theta_{C}\gamma_y}\Big( (1-\theta_{C}\gamma_y)^{2}\norm{\nabla\vF(\vX^{k+1})-\nabla\vF(\vX^{k})}_{C}^{2}\\ 
\nonumber & +\gamma_y^2\norm{(\vI-\vC)\tY^{k}-\widehat{\vY}^{k}}_{C}^{2}\Big)\\
\nonumber	\leq&(1-\theta_{C}\gamma_y)\norm{\Pi_{C}\vY^{k}}_{C}^{2}\\
\nonumber &+\frac{2}{\theta_{C}\gamma_y}\Big( (1-\theta_{C}\gamma_y)^{2}\delta_{C,2}^2\norm{\nabla\vF(\vX^{k+1})-\nabla\vF(\vX^{k})}_{F}^{2}\\ &+\delta_{C,2}^2\gamma_y^2\norm{\vI-\vC}_{C}^{2}\norm{\tY^{k}-\widehat{\vY}^{k}}_{F}^{2}\Big). 
\end{align}
Note that 
\begin{align}\label{tYk_hYk}
\nonumber	&\EE[\norm{\tY^{k}-\widehat{\vY}^{k}}_{F}^{2} \vert \cF^{k}]\\
\nonumber\leq& C \EE[\norm{\tY^{k}-\vH_{y}^{k}}_{F}^{2}\vert \cF^{k}]+s_k^{2}\sigma^2\\
\nonumber \leq& 2  C\norm{\vY^{k}-\vH_{y}^{k}}_{F}^{2}\\
& +2 C\EE[\norm{\nabla\vF(\vX^{k+1})-\nabla\vF(\vX^{k})}_{F}^{2}\vert \cF^{k}]+s_k^{2}\sigma^2.
\end{align}
Taking the conditional expectation on both sides of \eqref{PiCYkNorm}, we get
\begin{align}
\nonumber	&\EE[\norm{\Pi_{C}\vY^{k+1}}_{C}^{2}\vert \cF^{k}]\\
\nonumber	\leq&(1-\theta_{C}\gamma_y)\norm{\Pi_{C}\vY^{k}}_{C}^{2}\\
\nonumber &+\frac{2}{\theta_{C}\gamma_y}\Bigg[ 2C\delta_{C,2}^2\gamma_y^2\norm{\vI-\vC}_{2}^{2}\norm{\vY^{k}-\vH_{x}^{k}}_{F}^{2}
+\Big((1-\theta_{C}\gamma_y)^{2}\\ 
\nonumber &+2C\gamma_y^2\norm{\vI-\vC}_{C}^{2}\Big)\delta_{C,2}^2
\EE[\norm{\nabla\vF(\vX^{k+1})-\nabla\vF(\vX^{k})}_{F}^{2}\vert \cF^{k}]\\ 
\label{PiCYkNorm1} &
+\delta_{C,2}^2\gamma_y^2\norm{\vI-\vC}_{C}^{2}s_k^{2}\sigma^2\Bigg].
\end{align}
Recalling that 
\begin{align}\label{nFp_nFb}
\norm{\nabla\vF(\vX^{k+1})-\nabla\vF(\vX^{k})}_{F}^{2}\leq L^2 \norm{\vX^{k+1}-\vX^{k}}_{F}^{2},
\end{align}
we need to bound $\norm{\vX^{k+1}-\vX^{k}}_{F}^{2}$. From the update of the decision variables in Algorithm \ref{Alg:RCGTe}, we know
\begin{align}
\nonumber	\vX^{k+1}-\vX^{k}=&\widetilde{\vX}^{k}-\gamma_{x}(\vI-\vR)\widehat{\vX}^{k}-\vX^{k}\\
\nonumber=&\gamma_{x}(\vI-\vR)(\widetilde{\vX}^{k}-\widehat{\vX}^{k})-\gamma_{x}(\vI-\vR)\Pi_{R}\vX^{k}\\
&-\vR_{\gamma}\Lambda \vY^{k}.
\end{align}
Thus, we have 
\begin{align}\label{Xkp_XkNorm}	
\nonumber	&\norm{\vX^{k+1}-\vX^{k}}_{F}^{2}\\
\nonumber	\leq&3\norm{\gamma_{x}(\vI-\vR)(\widetilde{\vX}^{k}-\widehat{\vX}^{k})}_{F}^{2}
+3\norm{\gamma_{x}(\vI-\vR)\Pi_{R}\vX^{k}}_{F}^{2}\\
\nonumber	&+3\norm{\vR_{\gamma}\Lambda \vY^{k}}_{F}^{2}\\
\nonumber	\leq&3\gamma_{x}^{2}\norm{\vI-\vR}_{R}^{2}\norm{\widetilde{\vX}^{k}-\widehat{\vX}^{k}}_{F}^{2}
+3\gamma_{x}^{2}\norm{\vI-\vR}_{R}^{2}\norm{\Pi_{R}\vX^{k}}_{R}^{2}\\
&+3\hlambda^2\norm{\vR_{\gamma}}_{R}^{2}\norm{ \vY^{k}}_{F}^{2}.
\end{align}
Putting \eqref{Xkp_XkNorm}  and  \eqref{nFp_nFb}  back into \eqref{PiCYkNorm1} and using relation \eqref{tXk_hXkb}, we have
\begin{align}\label{PiCYkNorm2}
\nonumber	&\EE[\norm{\Pi_{C}\vY^{k+1}}_{C}^{2}\vert \cF^{k}]\\
\nonumber	\leq&(1-\theta_{C}\gamma_y)\norm{\Pi_{C}\vY^{k}}_{C}^{2}
+\frac{2}{\theta_{C}\gamma_y}\Bigg[ \Big((1-\theta_{C}\gamma_y)^{2}\\
\nonumber   &+2C\gamma_y^2\norm{\vI-\vC}_{C}^{2}\Big)\delta_{C,2}^2
L^2\Bigg(
3\gamma_{x}^{2}\norm{\vI-\vR}_{R}^{2}C\norm{\widetilde{\vX}^{k}-\vH_{x}^{k}}_{F}^{2}\\
\nonumber	&+3\gamma_{x}^{2}\norm{\vI-\vR}_{R}^{2}s_k^{2}\sigma^2
+3\gamma_{x}^{2}\norm{\vI-\vR}_{R}^{2}\norm{\Pi_{R}\vX^{k}}_{F}^{2}\\
\nonumber&+3\hlambda^2\norm{\vR_{\gamma}}_{R}^{2}\norm{ \vY^{k}}_{F}^{2} \Bigg)\\
\nonumber&+2C\delta_{C,2}^2\gamma_y^2\norm{\vI-\vC}_{C}^{2}\norm{\vY^{k}-\vH_{x}^{k}}_{F}^{2}\\
&+\delta_{C,2}^2\gamma_y^2\norm{\vI-\vC}_{C}^{2}s_k^{2}\sigma^2\Bigg].
\end{align}
For simplicity, we reorganize \eqref{PiCYkNorm2} as
\begin{align}\label{PiCYkNorm3}
\nonumber	&\EE[\norm{\Pi_{C}\vY^{k+1}}_{C}^{2}\vert \cF^{k}]\\
\nonumber	\leq&(1-\theta_{C}\gamma_y)\norm{\Pi_{C}\vY^{k}}_{C}^{2}\\
\nonumber	&+\frac{1}{\theta_{C}\gamma_y}\Bigg[ e_1
L^2\Bigg(
c_{34}C\gamma_{x}^{2}\norm{\widetilde{\vX}^{k}-\vH_{x}^{k}}_{F}^{2}
+c_{32}\gamma_{x}^{2}\norm{\Pi_{R}\vX^{k}}_{F}^{2}\\
&+ c_3\hlambda^2\norm{ \vY^{k}}_{F}^{2}  \Bigg)
+c_{35}C \gamma_y^2 \norm{\vY^{k}-\vH_{x}^{k}}_{F}^{2}\Bigg]
+s_k^{2}\zeta_{g},
\end{align}
where $c_3=3\norm{\vR_{\gamma}}_{R}^{2}$, 
$d_1=2\delta_{C,2}^2$, 
$d_2=4\delta_{C,2}^2\norm{\vI-\vC}_{C}^{2}$, 
$e_1=d_1 +d_2C  \geq d_1(1-\theta_{C}\gamma_y)^{2}+d_2C\gamma_y^2$, 
$c_{32}=c_{34}=3\norm{\vI-\vR}_{R}^{2}$, 
$c_{35}=4 \delta_{C,2}^2 \norm{\vI-\vC}_{C}^{2}$, 
and 
$\zeta_{g}=6\sigma^2/\theta_{C} \norm{\vI-\vR}_{R}^{2} L^2\big(d_1+d_2C\big) +2\sigma^2/\theta_{C}\delta_{C,2}^2\norm{\vI-\vC}_{C}^{2} 
\ge 6\sigma^2 \norm{\vI-\vR}_{R}^{2} L^2\big(d_1(1-\theta_{C}\gamma_y)^{2}+d_2C\gamma_y^2\big)\gamma_{x}^{2}/(\theta_{C}\gamma_y) +2\sigma^2\delta_{C,2}^2\norm{\vR_{\gamma}}_{R}^{2}\norm{\vI-\vC}_{C}^{2} \gamma_y^2/(\theta_{C}\gamma_y)$ 
holds since $\gamma_x\leq\gamma_y$.

\subsubsection{Third  inequality}
Recalling the  update of the variables $\vH_{x}^{k+1}$, $\tX^{k}$, and $\vY^{k+1}$ in Algorithm \ref{Alg:RCGTe}, we have
\begin{equation*}
\begin{aligned}
&\tX^{k+1}-\vH_{x}^{k+1}\\
=&\tX^{k+1}-\tX^{k}+\tX^{k}-(\vH^{k}_{x}+\alpha_x\vQ^{k}_{x})\\
=&\tX^{k+1}-\tX^{k}+(1-\alpha_x r)(\tX^{k}-\vH^{k}_{x})\\
&+\alpha_x r(\tX^{k}-\vH^{k}_{x}- \frac{\vQ^{k}_{x}}{r})	\\
=&\vX^{k+1}-\vX^{k}-\Lambda(\vY^{k+1}-\vY^{k})\\
&+(1-\alpha_x r)(\tX^{k}-\vH^{k}_{x})+\alpha_x r(\tX^{k}-\vH^{k}_{x}- \frac{\vQ^{k}_{x}}{r})
\end{aligned}
\end{equation*}
and 
\begin{align}\label{Ykp_Yk}
\nonumber	&\vY^{k+1}-\vY^{k}\\
\nonumber=&\tY^{k}-\gamma_{y} (\vI-\vC)\widehat{\vY}^{k}-\vY^{k}\\
\nonumber=&\gamma_{y}(\vI-\vC)(\tY^{k}-\widehat{\vY}^{k})-\gamma_{y}(\vI-\vC)\Pi_{C}\vY^{k}\\
&+\vC_{\gamma}(\nabla\vF(\vX^{k+1})-\nabla\vF(\vX^{k})).
\end{align}
Based on Lemma \ref{lem:UV},  we get
\begin{align}
\nonumber	&\norm{\tX^{k+1}-\vH_{x}^{k+1}}_{F}^{2}\\
\nonumber\leq&(1+\frac{2}{\alpha_x r \delta})\norm{\vX^{k+1}-\vX^{k}-\Lambda(\vY^{k+1}-\vY^{k})}_{F}^{2}\\
\nonumber&+(1+\frac{\alpha_x r \delta}{2})\bigg\Vert(1-\alpha_x r)(\tX^{k}-\vH^{k}_{x})\\
\nonumber&~+\alpha_x r(\tX^{k}-\vH^{k}_{x}- \frac{\vQ^{k}_{x}}{r})\bigg\Vert_{F}^{2}.
\end{align}
Taking the conditional expectation on $\cF^{k}$ yields
\begin{align}\label{tXkp_Hxkp}
\nonumber&\EE[\norm{\tX^{k+1}-\vH_{x}^{k+1}}_{F}^{2} \vert \cF^{k}]\\
\nonumber	\leq & (1+\frac{2}{\alpha_x r \delta})\Big(2\EE[\norm{\vX^{k+1}-\vX^{k}}_{F}^{2}\vert \cF^{k}]\\
\nonumber&+2\EE[\norm{\Lambda}_{2}^{2}\norm{\vY^{k+1}-\vY^{k})}_{F}^{2}\vert \cF^{k}]\Big)\\
\nonumber	&+(1+\frac{\alpha_x r \delta}{2})(1-\alpha_x r \delta)\norm{\tX^{k}-\vH^{k}_{x}}_{F}^{2}\\
&+s_k^{2}\alpha_x r \sigma^2_{r}(1+\frac{\alpha_x r \delta}{2}),
\end{align}
where we use 
\begin{align*}
&\EE[\norm{(1-\alpha_x r)(\tX^{k}-\vH^{k}_{x})+\alpha_x r(\tX^{k}-\vH^{k}_{x}- \frac{\vQ^{k}_{x}}{r})}_{F}^{2}\vert \cF^{k}]\\
\leq&(1-\alpha_x r)\norm{ \tX^{k}-\vH^{k}_{x} }_{F}^{2}+\alpha_x r \EE[\norm{ \tX^{k}-\vH^{k}_{x}- \frac{\vQ^{k}_{x}}{r} }_{F}^{2}\vert \cF^{k}]\\
\leq& (1-\alpha_x r)\norm{ \tX^{k}-\vH^{k}_{x} }_{F}^{2}+\alpha_x r(1-\delta)\norm{ \tX^{k}-\vH^{k}_{x} }_{F}^{2} \\
&+\alpha_x r s_k^2 \sigma^2_{r}\\
=& (1-\alpha_x r\delta)\norm{ \tX^{k}-\vH^{k}_{x} }_{F}^{2}  +s_k^2\alpha_x r \sigma^2_{r}.
\end{align*}
Then, we bound $\EE[\norm{\vY^{k+1}-\vY^{k}}_{F}^{2}\vert \cF^{k}]$.  Repeatedly using Lemma \ref{lem:UV} together with relation \eqref{Ykp_Yk}, we have 
\begin{align}
\nonumber &\EE[\norm{\vY^{k+1}-\vY^{k}}_{F}^{2}\vert \cF^{k}]\\
\nonumber\leq&3 \EE[\norm{ \gamma_{y}(\vI-\vC)(\tY^{k}-\widehat{\vY}^{k})}_{F}^{2}\vert \cF^{k}]
+3 \norm{\gamma_{y}(\vI-\vC)\Pi_{C}\vY^{k} }_{F}^{2}\\
\nonumber&+3 \EE[\norm{ \vC_{\gamma}(\nabla\vF(\vX^{k+1})-\nabla\vF(\vX^{k}))}_{F}^{2}\vert \cF^{k}]\\
\nonumber\leq&3\gamma_{y}^2\norm{ \vI-\vC}_{C}^{2}\EE[\norm{\tY^{k}-\widehat{\vY}^{k}}_{F}^{2}\vert \cF^{k}]\\
\nonumber&+3\gamma_{y}^2\norm{ \vI-\vC}_{C}^{2} \norm{ \Pi_{C}\vY^{k} }_{F}^{2}\\
\nonumber&+3\norm{ \vC_{\gamma} }_{C}^{2} \EE[\norm{  \nabla\vF(\vX^{k+1})-\nabla\vF(\vX^{k})}_{F}^{2}\vert \cF^{k}].
\end{align}
Using \eqref{tYk_hYk}, we obtain
\begin{align}\label{Ykp_Yk1}
\nonumber &\EE[\norm{\vY^{k+1}-\vY^{k}}_{F}^{2}\vert \cF^{k}]\\
\nonumber \leq&3\gamma_{y}^2\norm{ \vI-\vC}_{2}^{2}\Big( 2C\norm{\vY^{k}-\vH_{y}^{k}}_{F}^{2}\\ \nonumber&+2C\EE[\norm{  \nabla\vF(\vX^{k+1})-\nabla\vF(\vX^{k})}_{F}^{2}\vert \cF^{k}]+s_{k}^2\sigma^2\Big)\\
\nonumber&
+3\gamma_{y}^2\norm{ \vI-\vC}_{C}^{2} \norm{ \Pi_{C}\vY^{k} }_{F}^{2}\\
\nonumber&+3\norm{ \vC_{\gamma} }_{C}^{2} \EE[\norm{  \nabla\vF(\vX^{k+1})-\nabla\vF(\vX^{k})}_{F}^{2}\vert \cF^{k}]\\
\nonumber\leq&6C\gamma_{y}^2\norm{ \vI-\vC}_{C}^{2}  \norm{\vY^{k}-\vH_{y}^{k}}_{F}^{2} 
+3\gamma_{y}^2\norm{ \vI-\vC}_{C}^{2}s_{k}^2\sigma^2\\
\nonumber&+3\gamma_{y}^2\norm{ \vI-\vC}_{C}^{2} \norm{ \Pi_{C}\vY^{k} }_{F}^{2}
+\Big(3\norm{ \vC_{\gamma} }_{C}^{2}\\
&~+6C\gamma_{y}^2\norm{ \vI-\vC}_{C}^{2}\Big)L^2 \EE[\norm{\vX^{k+1}-\vX^{k}}_{F}^{2}\vert \cF^{k}].
\end{align}
Substituting \eqref{Ykp_Yk1} into \eqref{tXkp_Hxkp}, we have
\begin{align}
\nonumber&\EE[\norm{\tX^{k+1}-\vH_{x}^{k+1}}_{F}^{2} \vert \cF^{k}]\\
\nonumber \leq& (1+\frac{2}{\alpha_x r \delta})\Bigg[\Big(2+6L^2\hlambda^2 \norm{\vC_{\gamma}}_{C}^{2}\\
\nonumber&~+12CL^2\hlambda^2  \gamma_{y}^2 \norm{\vI-\vC}_{C}^{2}\Big)\EE[\norm{\vX^{k+1}-\vX^{k}}_{F}^{2}\vert \cF^{k}]\\
\nonumber	&+12C\hlambda^2  \gamma_{y}^2 \norm{\vI-\vC}_{2}^{2} \norm{ \vY^{k}-\vH_{y}^{k}}_{F}^{2}\\
\nonumber	&+6\hlambda^2 \gamma_{y}^2 \norm{\vI-\vC}_{2}^{2}\norm{\Pi_{C}\vY^{k} }_{C}^{2}
\Bigg]\\
\nonumber	&+(1-\frac{\alpha_x r \delta}{2})\norm{\tX^{k}-\vH^{k}_{x}}_{F}^{2}+s_k^{2}\alpha_x r \sigma^2_{r}(1+\frac{\alpha_x r \delta}{2})\\
&+6s_{k}^2\sigma^2\hlambda^2  \gamma_{y}^2 \norm{\vI-\vC}_{C}^{2} (1+\frac{2}{\alpha_x r \delta}).
\end{align}

Noting that  $6L^2\hlambda^2 \norm{\vC_{\gamma}}_{C}^{2}\leq 1 $, $12CL^2\hlambda^2   \norm{\vI-\vC}_{C}^{2}\leq 1 $
and $\gamma_{y}^2\leq 1$ from the assumption, we have  $2+6L^2\hlambda^2 \norm{\vC_{\gamma}}_{C}^{2} +12CL^2\hlambda^2  \norm{\vI-\vC}_{C}^{2}\gamma_{y}^2\leq 4$. 
Based on $1+\frac{2}{\alpha_x r \delta}\leq \frac{3}{\alpha_x r \delta}$ and $1+\frac{\alpha_x r \delta}{2}\leq 2$, we obtain
\begin{align}\label{tXkp_Hxkp2}
\nonumber&\EE[\norm{\tX^{k+1}-\vH_{x}^{k+1}}_{F}^{2} \vert \cF^{k}]\\
\nonumber\leq& \frac{3}{\alpha_x r \delta}\Big(4\EE[\norm{\vX^{k+1}-\vX^{k}}_{F}^{2}\vert \cF^{k}]\\
\nonumber&+12C\hlambda^2 \gamma_{y}^2   \norm{\vI-\vC}_{C}^{2} \norm{ \vY^{k}-\vH_{y}^{k}}_{F}^{2}\\
\nonumber&+6\hlambda^2  \gamma_{y}^2 \norm{\vI-\vC}_{C}^{2}\norm{\Pi_{C}\vY^{k} }_{C}^{2}
\Big)\\
\nonumber&+(1-\frac{\alpha_x r \delta}{2})\norm{\tX^{k}-\vH^{k}_{x}}_{F}^{2}\\
&+2s_k^{2}\alpha_x r \sigma^2_{r}
+s_{k}^2\sigma^2\hlambda^2  \gamma_{y}^2 \norm{\vI-\vC}_{C}^{2} \frac{18}{\alpha_x r \delta}.
\end{align}
Plugging \eqref{Xkp_XkNorm} into \eqref{tXkp_Hxkp2}, we get
\begin{align}
\nonumber&\EE[\norm{\tX^{k+1}-\vH_{x}^{k+1}}_{F}^{2} \vert \cF^{k}]\\
\nonumber\leq& \frac{3}{\alpha_x r \delta}\Bigg(12 \gamma_{x}^{2}\norm{\vI-\vR}_{R}^{2}C\norm{\widetilde{\vX}^{k}-\vH_{x}^{k}}_{F}^{2} \\
\nonumber&+12\gamma_{x}^{2}\norm{\vI-\vR}_{R}^{2}s_k^{2}\sigma^2
+12\gamma_{x}^{2}\norm{\vI-\vR}_{R}^{2}\norm{\Pi_{R}\vX^{k}}_{R}^{2}\\
\nonumber&+12\hlambda^2\norm{\vR_{\gamma}}_{R}^{2}\norm{ \vY^{k}}_{F}^{2}
+12C\hlambda^2 \gamma_{y}^2   \norm{\vI-\vC}_{C}^{2} \norm{ \vY^{k}-\vH_{y}^{k}}_{F}^{2}\\
\nonumber&+6\hlambda^2 \gamma_{y}^2  \norm{\vI-\vC}_{C}^{2}\norm{\Pi_{C}\vY^{k} }_{C}^{2}
\Bigg)\\
\nonumber&+(1-\frac{\alpha_x r \delta}{2})\norm{\tX^{k}-\vH^{k}_{x}}_{F}^{2}\\
\nonumber&+2s_k^{2}\alpha_x r \sigma^2_{r}
+s_{k}^2\sigma^2 \hlambda^2  \gamma_{y}^2 \norm{\vI-\vC}_{C}^{2}   \frac{18}{\alpha_x r \delta}\\
\nonumber=& \frac{1}{\alpha_x r \delta}\Bigg(d_{44} C\gamma_{x}^{2}\norm{\widetilde{\vX}^{k}-\vH_{x}^{k}}_{F}^{2} 
+d_{42}\gamma_{x}^{2}\norm{\Pi_{R}\vX^{k}}_{R}^{2}\\
\nonumber&+ c_4\hlambda^2\norm{ \vY^{k}}_{F}^{2}
+ c_{43}\hlambda^2\gamma_{y}^2\norm{\Pi_{C}\vY^{k}}_{C}^{2}\\
\nonumber&+c_{45}C\hlambda^2  \gamma_{y}^2  \norm{ \vY^{k}-\vH_{y}^{k}}_{F}^{2}
\Bigg)\\
\label{tXkp_Hxkp4}&+(1-\frac{\alpha_x r \delta}{2})\norm{\tX^{k}-\vH^{k}_{x}}_{F}^{2}+s_k^{2}\zeta_{cx}
,
\end{align}
where 
$c_4=12\norm{\vR_{\gamma}}_{R}^{2}$, 
$d_{42}=d_{44}=36\norm{\vI-\vR}_{R}^{2}$, 
$c_{43}= 18\norm{\vI-\vC}_{C}^{2}$, 
$c_{45}=36\norm{\vI-\vC}_{C}^{2}$, 
$\zeta_{cx}=2\alpha_x r \sigma^2_{r} +\sigma^2\Big(  \norm{\vI-\vC}_{C}^{2}\hlambda^2   +  2\norm{\vI-\vR}_{R}^{2} \Big)  \cdot \frac{18}{\alpha_x r \delta}
\ge 2\alpha_x r \sigma^2_{r}+\sigma^2\Big(  \norm{\vI-\vC}_{C}^{2}\hlambda^2  \gamma_{y}^2 +  2\norm{\vI-\vR}_{R}^{2}\gamma_{x}^{2} \Big)  \cdot \frac{18}{\alpha_x r \delta}$. 

\subsubsection{Fourth  inequality}
Recalling the  updates of the variables $\vH_{y}^{k+1}$ and $\vY^{k+1}$ in Algorithm \ref{Alg:RCGTe}, we know
\begin{align}
\nonumber	&\vY^{k+1}-\vH_{y}^{k+1}\\
\nonumber=&\vY^{k+1}-\tY^{k}+\tY^{k}-(\vH^{k}_{y}+\alpha_y\vQ^{k}_{y})\\
\nonumber	=&\vY^{k+1}-\tY^{k}+(1-\alpha_y r)(\tY^{k}-\vH^{k}_{y})\\
\nonumber	& +\alpha_y r(\tY^{k}-\vH^{k}_{y}- \frac{\vQ^{k}_{y}}{r}).
\end{align}
Based on Lemma \ref{lem:UV},  we have
\begin{align}\label{Ykp_Hkyp1}
\nonumber&\EE[\norm{ \vY^{k+1}-\vH_{y}^{k+1}}_{F}^{2} \vert \cF^{k}]\\
\nonumber\leq& (1+\frac{2}{\alpha_y r \delta})\EE[\norm{\vY^{k+1}-\tY^{k} }_{F}^{2}\vert \cF^{k}]\\
&+ (1-\frac{\alpha_y r \delta}{2})\EE[\norm{\tY^{k}-\vH^{k}_{y}}_{F}^{2}\vert \cF^{k}]
+2s_k^{2}\alpha_y r \sigma^2_{r},
\end{align}
where similar technique as in \eqref{tXkp_Hxkp} is used. The following step is to bound $\EE[\norm{\vY^{k+1}-\tY^{k} }_{F}^{2}\vert \cF^{k}]$. Recalling the update of $\vY^{k+1}$ in Algorithm \ref{Alg:RCGTe}, we get
\begin{equation*}
\vY^{k+1}-\tY^{k}=\gamma_{y}(\vI-\vC)(\tY^{k}-\widehat{\vY}^{k})-\gamma_{y}(\vI-\vC)\tY^{k}.
\end{equation*}
Using Lemma \ref{lem:UV}, we have
\begin{align*}
&\EE[\norm{\vY^{k+1}-\tY^{k} }_{F}^{2}\vert \cF^{k}]\\
\nonumber \leq& 2\EE[\norm{\gamma_{y}(\vI-\vC)(\tY^{k}-\widehat{\vY}^{k})}_{F}^{2}\vert \cF^{k}]\\
\nonumber &+2\EE[\norm{ \gamma_{y}(\vI-\vC)\tY^{k} }_{F}^{2}\vert \cF^{k}].
\end{align*}
Reviewing the update $\tY^{k}=\vY^{k}+\nabla\vF(\vX^{k+1})-\nabla\vF(\vX^{k})$, we know
\begin{align*}
&\EE[\norm{\vY^{k+1}-\tY^{k} }_{F}^{2}\vert \cF^{k}]\\
\leq& 2\EE[\norm{\gamma_{y}(\vI-\vC)(\tY^{k}-\widehat{\vY}^{k})}_{F}^{2}\vert \cF^{k}]\\
&+4\EE[\norm{ \gamma_{y}(\vI-\vC)\Pi_{C}\vY^{k}}_{F}^{2}\vert \cF^{k}]\\
&+4\EE[\norm{ \gamma_{y}(\vI-\vC)(\nabla\vF(\vX^{k+1})-\nabla\vF(\vX^{k})) }_{F}^{2}\vert \cF^{k}].
\end{align*}
Using \eqref{tYk_hYk}, we further have
\begin{align}\label{Ykp_tYk1}
\nonumber &\EE[\norm{\vY^{k+1}-\tY^{k} }_{F}^{2}\vert \cF^{k}]\\
\nonumber \leq& 4\gamma_{y}^2\norm{ \vI-\vC}_{C}^{2}C \norm{ \vY^{k}-\vH^{k}_{y} }_{F}^{2}\\
\nonumber &+4\gamma_{y}^2\norm{ \vI-\vC}_{C}^{2}C\EE[\norm{ \nabla\vF(\vX^{k+1})-\nabla\vF(\vX^{k})) }_{F}^{2}\vert \cF^{k}]\\
\nonumber &+2\gamma_{y}^2\norm{ \vI-\vC}_{C}^{2}s_{k}^2\sigma^2\\ 
\nonumber&+4\gamma_{y}^2\norm{ \vI-\vC}_{C}^{2} \norm{ \Pi_{C}\vY^{k} }_{F}^{2} \\
\nonumber &+4\gamma_{y}^2\norm{ \vI-\vC}_{C}^{2} \EE[\norm{  \nabla\vF(\vX^{k+1})-\nabla\vF(\vX^{k}))  }_{F}^{2}\vert \cF^{k}]\\
\nonumber \leq& 4\gamma_{y}^2\norm{ \vI-\vC}_{C}^{2}C \norm{ \vY^{k}-\vH^{k}_{y} }_{F}^{2}
+4\gamma_{y}^2\norm{ \vI-\vC}_{C}^{2} \norm{ \Pi_{C}\vY^{k} }_{F}^{2}\\
\nonumber&+4(C+1)\gamma_{y}^2\norm{ \vI-\vC}_{C}^{2}L^2\EE[\norm{  \vX^{k+1} -\vX^{k}  }_{F}^{2}\vert \cF^{k}]\\
&+2\gamma_{y}^2\norm{ \vI-\vC}_{C}^{2}s_{k}^2\sigma^2.
\end{align}
Besides, we bound $\EE[\norm{\tY^{k}-\vH^{k}_{y}}_{F}^{2}\vert \cF^{k}]$ as follows.
\begin{align}\label{tYk_Hky1}
\nonumber	&\EE[\norm{\tY^{k}-\vH^{k}_{y}}_{F}^{2}\vert \cF^{k}]\\
\nonumber\leq& (1+\frac{\alpha_y r \delta}{4})\norm{\vY^{k}-\vH^{k}_{y}}_{F}^{2}\\
\nonumber &+(1+\frac{4}{\alpha_y r \delta}) \EE[\norm{ \nabla\vF(\vX^{k+1})-\nabla\vF(\vX^{k})) }_{F}^{2}\vert \cF^{k}]\\
\nonumber	\leq& (1+\frac{\alpha_y r \delta}{4})\norm{\vY^{k}-\vH^{k}_{y}}_{F}^{2}\\
&+(1+\frac{4}{\alpha_y r \delta})L^2\EE[\norm{\vX^{k+1}-\vX^{k} }_{F}^{2}\vert \cF^{k}].
\end{align}
Putting \eqref{Ykp_tYk1} and \eqref{tYk_Hky1} back into \eqref{Ykp_Hkyp1}, we get
\begin{align}\label{Ykp_Hkyp2}
\nonumber&\EE[\norm{ \vY^{k+1}-\vH_{y}^{k+1}}_{F}^{2} \vert \cF^{k}]\\
\nonumber \leq& (1+\frac{2}{\alpha_y r \delta})\Big(4\gamma_{y}^2\norm{ \vI-\vC}_{C}^{2}C \norm{ \vY^{k}-\vH^{k}_{y} }_{F}^{2}\\
\nonumber&+2\gamma_{y}^2\norm{ \vI-\vC}_{C}^{2}s_{k}^2\sigma^2
+4\gamma_{y}^2\norm{ \vI-\vC}_{C}^{2} \norm{ \Pi_{C}\vY^{k} }_{F}^{2} \\
\nonumber&+4(C+1)\gamma_{y}^2\norm{ \vI-\vC}_{C}^{2}L^2\EE[\norm{  \vX^{k+1} -\vX^{k}  }_{F}^{2}\vert \cF^{k}]
\Big)\\
\nonumber	&+ (1-\frac{\alpha_y r \delta}{2})\Big[(1+\frac{\alpha_y r \delta}{4})\norm{\vY^{k}-\vH^{k}_{y}}_{F}^{2}\\
\nonumber&+(1+\frac{4}{\alpha_y r \delta})L^2\EE[\norm{\vX^{k+1}-\vX^{k} }_{F}^{2}\vert \cF^{k}]\Big]
+2s_k^{2}\alpha_y r \sigma^2_{r}\\
\nonumber\leq& \left[(1-\frac{\alpha_y r \delta}{4})+ 4C\gamma_{y}^2\norm{\vI-\vC}_{C}^{2} \frac{3}{\alpha_y r \delta}\right]\norm{\vY^{k}-\vH^{k}_{y}}_{F}^{2}\\
\nonumber&+\Big[4(C+1)L^2\gamma_{y}^2\norm{\vI-\vC}_{C}^{2} \frac{3}{\alpha_y r \delta}\\ \nonumber&\qquad+\frac{4}{\alpha_y r \delta}\Big]L^2\EE[\norm{\vX^{k+1}-\vX^{k} }_{F}^{2}\vert \cF^{k}]\\
\nonumber&+4\gamma_{y}^2\norm{\vI-\vC}_{C}^{2}\frac{3}{\alpha_y r \delta}\norm{\Pi_{C}\vY^{k} }_{C}^{2}\\
&+2s_{k}^2\sigma^2\gamma_{y}^2\norm{\vI-\vC}_{C}^{2}\frac{3}{\alpha_y r \delta}
+2s_k^{2}\alpha_y r \sigma^2_{r},
\end{align}
where we use the inequalities $1+\frac{2}{\alpha_y r \delta}\leq  \frac{3}{\alpha_y r \delta}$ and $(1-\frac{\alpha_y r \delta}{2})(1+\frac{4}{\alpha_y r \delta})\leq \frac{4}{\alpha_y r \delta}$. 
Plugging \eqref{Xkp_XkNorm} into \eqref{Ykp_Hkyp2}, we obtain
\begin{align}\label{Ykp_Hkyp3}
\nonumber&\EE[\norm{ \vY^{k+1}-\vH_{y}^{k+1}}_{F}^{2} \vert \cF^{k}]\\
\nonumber\leq& \left[(1-\frac{\alpha_y r \delta}{4})+ 4C\gamma_{y}^2\norm{\vI-\vC}_{C}^{2} \frac{3}{\alpha_y r \delta}\right]\norm{\vY^{k}-\vH^{k}_{y}}_{F}^{2}\\
\nonumber&+\Big[4(C+1)L^2\gamma_{y}^2\norm{\vI-\vC}_{C}^{2} \frac{3}{\alpha_y r \delta}\\
\nonumber&\qquad + \frac{4}{\alpha_y r \delta}\Big]L^2\Bigg(
3\gamma_{x}^{2}\norm{\vI-\vR}_{R}^{2}C\norm{\widetilde{\vX}^{k}-\vH_{x}^{k}}_{F}^{2}\\
\nonumber	&+3\gamma_{x}^{2}\norm{\vI-\vR}_{R}^{2}s_k^{2}\sigma^2
+3\gamma_{x}^{2}\norm{\vI-\vR}_{R}^{2}\norm{\Pi_{R}\vX^{k}}_{F}^{2}\\
\nonumber &+3\hlambda^2\norm{\vR_{\gamma}}_{R}^{2}\norm{ \vY^{k}}_{F}^{2} \Bigg)\\
\nonumber&+4\gamma_{y}^2\norm{\vI-\vC}_{C}^{2} \frac{3}{\alpha_y r \delta}\norm{\Pi_{C}\vY^{k} }_{C}^{2}\\
&+2s_{k}^2\sigma^2\gamma_{y}^2\norm{\vI-\vC}_{C}^{2}\frac{3}{\alpha_y r \delta}
+2s_k^{2}\alpha_y r \sigma^2_{r}.
\end{align}
Then, we get
\begin{align}
\nonumber&\EE[\norm{ \vY^{k+1}-\vH_{y}^{k+1}}_{F}^{2} \vert \cF^{k}]\\
\nonumber\leq& (1-\frac{\alpha_y r \delta}{4})\norm{\vY^{k}-\vH^{k}_{y}}_{F}^{2}\\
\nonumber&+\frac{1}{\alpha_y r \delta}\Bigg\{
d_{55}C\gamma_{y}^2 \norm{\vY^{k}-\vH^{k}_{y}}_{F}^{2}
+d_{53}\gamma_{y}^2 \norm{\Pi_{C}\vY^{k} }_{C}^{2}\\
\nonumber&+e_2 L^2\bigg[
d_{54}C\gamma_{x}^{2} \norm{\widetilde{\vX}^{k}-\vH_{x}^{k}}_{F}^{2}
+d_{52}\gamma_{x}^{2} \norm{\Pi_{R}\vX^{k}}_{F}^{2}\\
&~+c_5\hlambda^2\norm{ \vY^{k}}_{F}^{2} \bigg]
\Bigg\}	
+s_{k}^2\zeta_{cy},
\label{Ykp_Hkyp4}
\end{align}
where 
$c_5=3\norm{\vR_{\gamma}}_{R}^{2}$, 
$d_3=d_{53}=d_{55}=12\norm{\vI-\vC}_{C}^{2}$, 
$d_4=4$, 
$e_2=d_3(C+1) + d_4 \geq  d_3(C+1)\gamma_{y}^2+ d_4 $, 
$d_{52}=d_{54}=  3 \norm{\vI-\vR}_{R}^{2}$, 
$\zeta_{cy}=\sigma^2\Big[2\norm{\vI-\vC}_{C}^{2}
+e_2L^2\norm{\vI-\vR}_{R}^{2}\Big] \frac{3}{\alpha_y r \delta}
+ 2\alpha_y r \sigma^2_{r}
\ge \sigma^2\Big[2\norm{\vI-\vC}_{C}^{2}\gamma_{y}^2+e_2 L^2\norm{\vI-\vR}_{R}^{2}\gamma_{x}^{2}\Big] \frac{3}{\alpha_y r \delta}+ 
2\alpha_y r \sigma^2_{r}$.

\subsection{Proof of Lemma \ref{Lem:descent}}\label{Pf:descent}
From Assumption \ref{Assumption: function}, the gradient of $f$ is $L$-Lipschitz continuous. In addition, we know $\oX^{k+1}=\oX^{k}-\frac{1}{n}\vu_{R}^{\T}\Lambda \vY^{k}$. Then, we have
\begin{align}
\nonumber &f(\oX^{k+1})\\
\nonumber \leq& f(\oX^{k})+\langle \nabla f(\oX^{k}), \oX^{k+1}-\oX^{k}\rangle 
+\frac{L}{2}\norm{\oX^{k+1}-\oX^{k}}^2\\
\nonumber =&f(\oX^{k})-\overline{\lambda}\langle \nabla f(\oX^{k}),  \frac{1}{n\overline{\lambda}}\vu_{R}^{\T}\Lambda \vY^{k}\rangle +\frac{L\overline{\lambda}^2}{2}\norm{\frac{1}{n\overline{\lambda}}\vu_{R}^{\T}\Lambda \vY^{k}}^2\\
\nonumber =&f(\oX^{k})-\frac{\overline{\lambda}}{2} \Bigg( \norm{\nabla f(\oX^{k})}^2  +\norm{\frac{1}{n\overline{\lambda}}\vu_{R}^{\T}\Lambda \vY^{k}}^2 \\
\nonumber &-\norm{\nabla f(\oX^{k})-\frac{1}{n\overline{\lambda}}\vu_{R}^{\T}\Lambda \vY^{k} }^2 \Bigg) 
\nonumber+\frac{L\overline{\lambda}^2}{2}\norm{\frac{1}{n\overline{\lambda}}\vu_{R}^{\T}\Lambda \vY^{k}}^2,
\end{align}
where the second equality is based on $\langle \va,\vb \rangle=\frac{1}{2}(\norm{\va}^2+\norm{\vb}^2-\norm{\va-\vb}^2)$. 
Note that $\overline{\lambda}\leq\frac{1}{L}$ from the assumption in Lemma \ref{Lem:descent}, we know 
$\frac{L\overline{\lambda}^2}{2}\norm{\frac{1}{n\overline{\lambda}}\vu_{R}^{\T}\Lambda \vY^{k}}^2
\leq  \frac{\overline{\lambda}}{2}\norm{\frac{1}{n\overline{\lambda}}\vu_{R}^{\T}\Lambda \vY^{k}}^2$. Meanwhile, $\nabla f(\oX^{k})-\frac{1}{n\overline{\lambda}}\vu_{R}^{\T}\Lambda \vY^{k}=\nabla f(\oX^{k})-\oY^{k} -\frac{1}{n\overline{\lambda}}\vu_{R}^{\T}\Lambda \Pi_{C}\vY^{k}$. Thus, we have
\begin{align}
\nonumber  &f(\oX^{k+1})\\
\nonumber\leq &f(\oX^{k})-\frac{\overline{\lambda}}{2}\norm{\nabla f(\oX^{k})}^2  \\  
\nonumber &+\frac{\overline{\lambda}}{2}\norm{\nabla f(\oX^{k})-\oY^{k} -\frac{1}{n\overline{\lambda}}\vu_{R}^{\T}\Lambda \Pi_{C}\vY^{k} }^2 \\
\nonumber \leq&f(\oX^{k})-\frac{\overline{\lambda}}{2}   \norm{\nabla f(\oX^{k})}^2   +\frac{L^2\overline{\lambda}}{n}\norm{\Pi_{R}\vX^{k}}_{R}^{2}\\
\nonumber &+\frac{\norm{\vu_{R}}^{2}\hlambda^2}{n^2\overline{\lambda}}\norm{\Pi_{C}\vY^{k}}_{C}^{2}\\
\nonumber \leq&f(\oX^{k})-\frac{M\hlambda}{2}   \norm{\nabla f(\oX^{k})}^2  \\ 
\nonumber&+\frac{\norm{\vu_{R}} \norm{\vu_{C}}}{n^2} \hlambda L^2\norm{\Pi_{R}\vX^{k}}_{R}^{2}
+\frac{\norm{\vu_{R}}^{2}}{n^2M}\hlambda\norm{\Pi_{C}\vY^{k}}_{C}^{2}, 
\end{align}
where we use Lemma \ref{lem:UV} in the  second inequality,
the assumption $\overline{\lambda}\geq M \hlambda$ and the definition $\overline{\lambda}=\frac{1}{n}\vu_{R}^{\T}\Lambda \vu_{C}\leq \frac{1}{n}\vu_{R}^{\T}\vu_{C}\hlambda\leq \frac{1}{n}\norm{\vu_{R}}\norm{\vu_{C}}\hlambda
$ in the third inequality.

\subsection{Proof of Theorem \ref{Thm:RCPP}}\label{Pf:RCPP}
Let $V^{k}=L^2\EE[\norm{\Pi_{R}\vX^{k}}_{R}^{2}]+A\EE[\norm{\Pi_{C}\vY^{k}}_{C}^{2}]+B\EE[\norm{\tX^{k}-\vH^{k}_{x}}_{F}^{2}]+D\EE[\norm{ \vY^{k}-\vH_{y}^{k}}_{F}^{2} ]$,
where 
$A=\frac{\theta_{C}\gamma_{y}\theta_{R}}{4 e_1 c_{32}\gamma_{x}}$, 
$B=\frac{L^2 \alpha_x r \delta \theta_{R}}{4 d_{42}\gamma_x}$,
$D=\frac{\theta_{C}\gamma_{y} \alpha_y r \delta \theta_{R}}{4 e_2 d_{52}\gamma_x}\leq\frac{  \alpha_y r \delta \theta_{R}}{4 e_2 d_{52}\gamma_x}$. 
Combining Lemmas \ref{Lem:Yk} and \ref{Lem:RCPP} with the conditions on $\gamma_{x},\gamma_{y},\hlambda$, we have
\begin{equation*}
\begin{aligned}
&V^{k+1}\\
\leq& \left( 1-\frac{7\theta_{R}\gamma_x}{32}
\right) L^2 \norm{\Pi_{R}\vX^{k}}_{R}^{2}
+\left( 1-\frac{\theta_{C}\gamma_y }{4} 
\right) A\norm{\Pi_{C}\vY^{k}}_{C}^{2}\\
&+\left(1-\frac{\alpha_x r \delta}{4}
\right)B\norm{\tX^{k}-\vH^{k}_{x}}_{F}^{2}\\
&+\left(1-\frac{\alpha_y r \delta}{16}
\right)D\norm{\vY^{k}-\vH^{k}_{y}}_{F}^{2}\\
&+ \frac{\beta}{4 E}\norm{\nabla f(\oX^{k})}^2
+s_k^{2}\zeta_0,
\end{aligned}
\end{equation*}
where $\beta=\frac{\theta_{R}\gamma_{x}}{8}$ and $E=\frac{\norm{\vu_{R}}\norm{\vu_{C}} }{n^2M}$.
Define $U^{k}=f(\oX^{k})-f(\vX^*)$. 
Combining Lemma \ref{Lem:descent}, we obtain
\begin{align*}
&U^{k+1}+\frac{E M\hlambda}{\beta} V^{k+1}\\
\leq& U^{k}-\frac{M\hlambda}{4}   \norm{\nabla f(\oX^{k})}^2
+\frac{E M\hlambda}{\beta}\Bigg[\left( 1-\frac{3\theta_{R}\gamma_x}{32}
\right) L^2 \norm{\Pi_{R}\vX^{k}}_{R}^{2}\\
&+\left( 1-\frac{\theta_{C}\gamma_y }{8} 
\right) A\norm{\Pi_{C}\vY^{k}}_{C}^{2}
+\left(1-\frac{\alpha_x r \delta}{4}
\right)B\norm{\tX^{k}-\vH^{k}_{x}}_{F}^{2}\\
&+\left(1-\frac{\alpha_y r \delta}{16}
\right)D\norm{\vY^{k}-\vH^{k}_{y}}_{F}^{2}
\Bigg]+s_k^{2}\tilde{\zeta_0},
\end{align*}
where $\tilde{\zeta_0}=\zeta_0\frac{E M\hlambda}{\beta}$ and we use the fact $\gamma_{x}\leq \frac{\sqrt{M\norm{\vu_{C}}}}{2\sqrt{\norm{\vu_{R}}e_1 c_{32}}}\theta_{C}\gamma_y$. 
Denote $\tilde{\rho}=\max\{1-\frac{1}{2}M\hlambda\mu,1-\frac{\theta_{R}\gamma_x}{16},1-\frac{\theta_{C}\gamma_y}{8},1-\frac{\alpha_x r \delta}{4},1-\frac{\alpha_y r \delta}{16}\}$ and choose $s_k^2=c_0c^k$ where $c\in(\tilde{\rho},1)$. Recalling the PL condition, we obtain $-\frac{M\hlambda}{4}\norm{\nabla f(\oX^{k})}^2\leq -\frac{M\hlambda}{2} U^{k} $. Thus, we have
\begin{align*}
&U^{k+1}+\frac{E M\hlambda}{\beta} V^{k+1}\\
\leq&\tilde{\rho}(U^{k} + \frac{E M\hlambda}{\beta} V^{k})+s_k^{2}\tilde{\zeta_0}\\
\leq&\tilde{\rho}^{k+1}(U^{0} + \frac{E M\hlambda}{\beta} V^{0})+\sum_{l=0}^{k}\tilde{\rho}^{k-l}c^{l}\Theta\\
\leq&\tilde{\rho}^{k+1}(U^{0} + \frac{E M\hlambda}{\beta} V^{0})+ c^{k}\Theta\sum_{l=0}^{k}\left(\frac{\tilde{\rho}}{c}\right)^{k-l}\\
\leq&\tilde{\rho}^{k+1}(U^{0} + \frac{E M\hlambda}{\beta} V^{0})+c^{k+1} \frac{\Theta}{c-\tilde{\rho}},
\end{align*}
where $\Theta=c_0\tilde{\zeta_0}$.

%

%
%

%
%

\ifCLASSOPTIONcaptionsoff
\newpage
\fi

\end{document}